\definecolor{mygray}{gray}{0.85}
\renewcommand{\leq}{\leqslant}
\renewcommand{\geq}{\geqslant}
\def\subsection{\@startsection{subsection}{3}%
  \z@{.5\linespacing\@plus.7\linespacing}{.3\linespacing}%
  {\bfseries\centering}}
\def\subsubsection{\@startsection{subsubsection}{3}%
  \z@{.5\linespacing\@plus.7\linespacing}{.3\linespacing}%
  {\centering}}
\def\myfnt{\ifx\protect\@typeset@protect\expandafter\footnote\else\expandafter\@gobble\fi}
\newtheorem{theorem}{Theorem}[section]
\newtheorem{corollary}[theorem]{Corollary}
\newtheorem{definition}[theorem]{Definition}
\newtheorem{lemma}[theorem]{Lemma}
\newtheorem{proposition}[theorem]{Proposition}
\newtheorem{fact}[theorem]{Fact}
\newtheorem{remark}[theorem]{Remark}
\newtheorem{notation}[theorem]{Notation}
\newtheorem{convention}[theorem]{Convention}
\newcounter{claimcounter}
\numberwithin{claimcounter}{theorem}
\newcommand{\pureindep}[1][]{%
  \mathrel{
    \mathop{
      \vcenter{
        \hbox{\oalign{\noalign{\kern-.3ex}\hfil$\vert$\hfil\cr
              \noalign{\kern-.7ex}
              $\smile$\cr\noalign{\kern-.3ex}}}
      }
    }\displaylimits_{#1}
  }
}
\begin{document}

\begin{abstract} 
We prove that every quasi-Hopfian finitely presented structure $A$ has a $d$-$\Sigma_2$ Scott sentence, and that if in addition $A$ is computable and $Aut(A)$ satisfies a natural computable condition, then $A$ has a computable $d$-$\Sigma_2$ Scott sentence. This unifies several known results on Scott sentences of finitely presented structures and it is used to prove that other not previously considered algebraic structures of interest have computable $d$-$\Sigma_2$ Scott sentences. 
In particular, we show that every right-angled Coxeter group of finite rank has a computable $d$-$\Sigma_2$ Scott sentence, as well as any strongly rigid Coxeter group of finite rank.
Finally, we show that the free projective plane of rank $4$ has a computable $d$-$\Sigma_2$ Scott sentence, thus exhibiting a natural example where the assumption of quasi-Hopfianity is used (since this structure is not Hopfian).
\end{abstract}

\title[Computable Scott Sentences for Finitely Presented Structures]{Computable Scott Sentences for Quasi-Hopfian Finitely Presented Structures}


\author{Gianluca Paolini}
\address{Department of Mathematics ``Giuseppe Peano'', University of Torino, Italy.}

\date{\today}
\maketitle

\section{Introduction}

	Scott proved that for every countable $L$-structure $A$ there is an $L$-sentence $\Theta_A$ of the logic $\mathfrak{L}_{\omega_1, \omega}$ such that for any countable $L$-structure $B$, $B \models \Theta_A$ if and only if $A \cong B$. Recently, there have been a number of papers (such\footnote{This list does not intend to be complete and we are sorry for omissions.} as \cite{ho, knight1, knight2, trainor1, trainor2}) which dealt with the problem of determination of the syntactic complexity of {\em optimal} Scott sentences for a given finitely generated structure, i.e. Scott sentences which are best as possible with respect to a given syntactic stratification of $\mathfrak{L}_{\omega_1, \omega}$.
	
	In particular, said results dealt with the following stratification of the formulas of $\mathfrak{L}_{\omega_1, \omega}$, a classification which is central in {\em computable model theory}.  We say:
	\begin{enumerate}[(1)]
	\item $\varphi(\bar{x})$ is computable $\Pi_0$ and computable $\Sigma_0$ if it is finitary quantifier-free; 
	\item For an ordinal (resp. a computable ordinal) $\alpha >0$:
	\begin{enumerate}[(2.1)]
	\item $\varphi(\bar{x})$ is $\Sigma_\alpha$ (resp. computable $\Sigma_\alpha$) if it is a disjunction (resp. a computably enumerable disjunction) of formulas of the form $\exists\bar{y}\psi(\bar{x}, \bar{y})$, where $\psi$ is a $\Pi_\beta$-formula (resp. computable $\Pi_\beta$-formula) for some $\beta < \alpha$;
\end{enumerate}
	\begin{enumerate}[(2.2)]
	\item $\varphi(\bar{x})$ is $\Pi_\alpha$ (resp. computable $\Pi_\alpha$) if it is a conjunction (resp. a computably enumerable conjunction) of formulas of the form $\forall\bar{y}\psi(\bar{x}, \bar{y})$, where $\psi$ is a $\Sigma_\beta$-formula (resp. computable $\Sigma_\beta$-formula) for some $\beta < \alpha$;
\end{enumerate}
	\begin{enumerate}[(2.3)]
	\item $\varphi(\bar{x})$ is $d$-$\Sigma_\alpha$ if it is a conjunction of a $\Sigma_\alpha$-formula and a $\Pi_\alpha$-formula.
\end{enumerate}\end{enumerate}

	As remarked in \cite{knight2} every finitely generated structure has a $\Sigma_3$ Scott sentence. On the other hand, it has been known for a while that many finitely generated structures of interest have a $d$-$\Sigma_2$ Scott sentence (or even a computable $d$-$\Sigma_2$ Scott sentence), such as for example finitely generated abelian groups \cite{calvert, knight2}, free groups of finite rank \cite{knight1}, and the infinite dihedral group \cite{knight2}. This motivated a whole program in computable model theory toward the identification of dividing lines for optimal Scott sentences of finitely generated structures. Among the many results we mention the resolution in the negative in \cite{trainor1} of an important open problem: does every finitely generated group has a $d$-$\Sigma_2$ Scott sentence? Furthermore, in \cite{ho} and \cite{trainor2} many new examples of finitely generated structures with $d$-$\Sigma_2$ (resp. computable $d$-$\Sigma_2$) Scott sentences were exhibited, \mbox{among which rings, fields, modules, etc.} 
	
	In this work we try to identify some common abstract properties of a finitely presented structure $A$ ensuring that $A$ has a d-$\Sigma_2$ Scott sentence (resp. a computable d-$\Sigma_2$ Scott sentence). The main ingredient of our general approach is the following weaker version of Hopfianity,  which we refer to as {\em quasi-Hopfianity}.

	\begin{definition} Let $A$ be a finitely generated structure. We say that $A$ is quasi-Hopfian if there exists a finite generating set $\bar{a}$ of $A$ such that whenever $f: A \rightarrow A$ is a surjective homomorphism of $A$ which is injective on $\bar{a}$ we have that $f \in Aut(A)$.
\end{definition}

	Clearly, every Hopfian structure is quasi-Hopfian, but for example, as proved in \cite{johnson, sandler_hopfian}, free projective planes are quasi-Hopfian but not Hopfian. The utility of the notion of Hopfianity in the study of optimal Scott sentences of finitely generated structures is already well-known, for examples in \cite{trainor2} it is proved that finitely presented Hopfian groups have d-$\Sigma_2$ Scott sentences. Our contribution to the subject is twofold, on one hand we show that the weaker notion of quasi-Hopfinaity implies d-$\Sigma_2$ Scott sentences for {\em any finitely presented structure} $A$. On the other hand, and more interestingly, we define a computable condition on $Aut(A)$ imposing that a quasi-Hopfian computable structure $A$ has a {\em computable} d-$\Sigma_2$ Scott sentence. This computable condition is often implicitly verified in the study of group of automorphisms of finitely presented structures, and it is intended to establish a bridge between the algebraic study of $Aut(A)$ and the study of optimal Scott sentences for $A$. The last part of the paper focuses on applications. Firstly, we show that our general criterion covers the case of free groups of finite rank \cite{knight1}, free abelian groups of finite rank \cite{knight2}, and the infinite dihedral group \cite{knight2}. Secondly, we use our methods to prove the existence of computable d-$\Sigma_2$ Scott sentence for a large subclass of a class of finitely presented structures of central interest in group theory which, at the best of our knowledge, has not yet been considered in computable model theory: {\em Coxeter groups of finite rank}. Finally, we prove that the free projective plane of rank $4$ (which, as already observed, is not Hopfian) has a computable d-$\Sigma_2$-Scott sentence, thus exhibiting a \mbox{natural example where quasi-Hopfianity is used.}
	
	\medskip
	
	We now state our results:

	\begin{theorem}\label{main_theorem} Let $A = \langle a_1, ..., a_n \rangle_A$ be a quasi-Hopfian finitely presented structure. Then the $Aut(A)$-orbit of $(a_1, ..., a_n)$ in $A$ is $\Pi_1$-definable, and so $A$ has a $d$-$\Sigma_2$ Scott sentence (by \cite{alvir}). Suppose further that $A$ is computable and that there is a finite $X \subseteq Aut(A)$ and a computable $F: \omega^n \rightarrow \omega$ such that $Aut(A) = \langle X \rangle_{Aut(A)}$ and:
\begin{equation}\tag{$\star$}\text{for every $\alpha \in Aut(A)$, $lg_X(\alpha) \leq F(lg_{S}(\alpha(a_1)), ..., lg_{S}(\alpha(a_n)))$},
\end{equation}
where $S = \{ a_1, ..., a_n \}$, $lg_X(\alpha)$ is computed in $Aut(A)$ and $lg_{S}(\alpha(s_i))$ is computed in $A$.
Then the $Aut(A)$-orbit of $(a_1, ..., a_n)$ in $A$ is definable by a {\em computable} $\Pi_1$-formula, and so $A$ has a {\em computable} $d$-$\Sigma_2$ Scott sentence (by \cite{alvir}).
\end{theorem}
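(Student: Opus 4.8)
The plan is to first isolate the orbit combinatorially and then bound its logical complexity. Fix the finite generating tuple $\bar a=(a_1,\dots,a_n)$ witnessing quasi-Hopfianity, together with a finite set $\Delta(\bar x)$ of atomic formulas presenting $A$: thus for every structure $B$ and tuple $\bar b$ with $B\models\Delta(\bar b)$ there is a unique homomorphism $\phi_{\bar b}\colon A\to B$ sending $a_i\mapsto b_i$. Taking $B=A$, I claim that $\bar b$ lies in the $Aut(A)$-orbit of $\bar a$ if and only if $A\models\Delta(\bar b)$, the endomorphism $\phi_{\bar b}$ is surjective, and $\phi_{\bar b}$ is injective on $\bar a$. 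The forward direction is clear, since automorphisms preserve atomic formulas, surjectivity and distinctness of coordinates; the converse is exactly an application of quasi-Hopfianity, giving $\phi_{\bar b}\in Aut(A)$ with $\phi_{\bar b}(\bar a)=\bar b$. The problem is thereby reduced to expressing these three conditions by a single $\Pi_1$ formula.

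As the candidate I would take $\pi(\bar x):=\Delta(\bar x)\wedge\bigwedge\{\neg\psi(\bar x):\psi\text{ existential and }A\not\models\psi(\bar a)\}$, the conjunction of $\Delta$ with the $\Pi_1$-type of $\bar a$; since $\neg\exists\bar y\,\theta$ is $\forall\bar y\,\neg\theta$, this is a $\Pi_1$ formula, and semantically it asserts $\Delta(\bar b)$ together with $\Sigma_1\text{-tp}(\bar b)\subseteq\Sigma_1\text{-tp}(\bar a)$. The inclusion $\mathrm{orbit}\subseteq\{\bar b:A\models\pi(\bar b)\}$ is immediate. For the reverse inclusion, note first that $\Delta(\bar b)$ makes $\phi_{\bar b}$ a homomorphism, hence (homomorphisms preserve positive atomic formulas) $\bar b$ and $\bar a$ have the same positive atomic type; and if $t(\bar a)\neq s(\bar a)$ while $t(\bar b)=s(\bar b)$, the latter existential equality would force $t(\bar a)=s(\bar a)$, a contradiction, so $\phi_{\bar b}$ is an embedding and $\bar a,\bar b$ have the same atomic type. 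In particular the assignment $b_i\mapsto a_i$ extends to a \emph{surjective} homomorphism $\eta_0\colon\langle\bar b\rangle\to A$ with $\eta_0\circ\phi_{\bar b}=\mathrm{id}_A$, and since embeddings preserve existential formulas forward we even obtain $\Sigma_1\text{-tp}(\bar a)=\Sigma_1\text{-tp}(\bar b)$.

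It remains to derive that $\phi_{\bar b}$ is \emph{surjective}, i.e. that $\langle\bar b\rangle=A$; this is the main obstacle. Equivalently, one must show that a quasi-Hopfian finitely presented structure admits no \emph{proper} $\Sigma_1$-elementary self-embedding, so that the surjection $\eta_0\colon\langle\bar b\rangle\to A$ is forced to have domain all of $A$. The route I would pursue is to manufacture from the data a surjective endomorphism of $A$ that is injective on $\bar a$ and to which quasi-Hopfianity applies: using $\Sigma_1$-elementarity to locate, for each generator $a_k$, an existential witness that must already live in $\langle\bar b\rangle$, and assembling these into a surjective endomorphism fixing $\bar a$, which quasi-Hopfianity upgrades to an automorphism and hence to the identity, collapsing $A\setminus\langle\bar b\rangle$. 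Verifying that this construction genuinely forces $\langle\bar b\rangle=A$ (rather than merely reproducing the identity on $\langle\bar b\rangle$, as happens for non-generating tuples of strictly larger $\Sigma_1$-type) is the delicate heart of the argument, and is exactly the point at which quasi-Hopfianity, as opposed to mere finite generation, is indispensable. Granting it, the orbit equals $\{\bar b:A\models\pi(\bar b)\}$ and is $\Pi_1$-definable, whence $A$ has a $d$-$\Sigma_2$ Scott sentence by \cite{alvir}.

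For the computable refinement, the role of $(\star)$ is to make the orbit a \emph{decidable} subset of $A^n$. Given $\bar b$, the quantities $lg_S(b_i)$ are computable in the computable structure $A$ (search terms in $\bar a$ of increasing length until one equals $b_i$, using decidability of equality), so $N:=F(lg_S(b_1),\dots,lg_S(b_n))$ is computable; by $(\star)$ any $\alpha\in Aut(A)$ with $\alpha(\bar a)=\bar b$ satisfies $lg_X(\alpha)\leq N$, so it suffices to enumerate the finitely many words in $X$ of length at most $N$, evaluate each (the elements of $X$ act computably and compose) on $\bar a$, and compare with $\bar b$. Thus orbit membership is decidable, and in particular the complement of the orbit is computably enumerable. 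I would then effectivize the previous paragraph: from each enumerated $\bar b$ outside the orbit produce, uniformly and computably, a finitary quantifier-free or universal formula true of $\bar a$ and false of $\bar b$ (a distinguishing atomic formula when the atomic types differ, and the existential witness furnished by the surjectivity argument otherwise); the computably enumerable conjunction of these formulas is then a computable $\Pi_1$ formula defining the orbit, giving a computable $d$-$\Sigma_2$ Scott sentence by \cite{alvir}. The main effective obstacle is precisely the uniform computability of these distinguishing formulas, i.e. the effective content of the surjectivity step, for which the search bound supplied by $(\star)$ is the essential tool.
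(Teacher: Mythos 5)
Your reduction is sound as far as it goes: the orbit of $\bar a$ is $\Pi_1$-definable (in the infinitary sense relevant here, i.e.\ by a countable conjunction of universal formulas) if and only if it coincides with the set of realizers of $\Delta(\bar x)$ together with the $\Pi_1$-type of $\bar a$; your verification that any realizer $\bar b$ gives an embedding $\phi_{\bar b}$ with $\Sigma_1\text{-tp}(\bar b)=\Sigma_1\text{-tp}(\bar a)$ is correct, and so is your use of $(\star)$ to decide orbit membership (this part matches the paper's proof). But there is a genuine gap exactly where you flag it: you never prove that a realizer $\bar b$ generates $A$; you only say ``granting it''. Moreover, the route you sketch for filling it cannot work: an endomorphism of $A$ fixing $\bar a$ pointwise is automatically the identity (since $\bar a$ generates $A$), so quasi-Hopfianity applied to it yields nothing and no ``collapse'' of $A\setminus\langle\bar b\rangle$ can be extracted from it. Quasi-Hopfianity upgrades surjections to automorphisms; it says nothing directly about the injective map $\phi_{\bar b}$, so attacking surjectivity of $\phi_{\bar b}$ head-on is the wrong move.

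The missing idea --- which is the content of the paper's Lemma~\ref{crucial_lemma} --- is to transfer a suitable \emph{existential} formula from $\bar b$ to $\bar a$, rather than to analyze $\phi_{\bar b}$. Fix terms $s_i$ with $b_i = s_i(\bar a)$ and let $\chi(\bar x)$ be $\exists \bar y\,(\psi(\bar y) \wedge \bigwedge_i x_i = s_i(\bar y))$, where $\psi$ is the presentation plus distinctness of coordinates. Then $A \models \chi(\bar b)$ with witness $\bar y = \bar a$, so by your type inclusion $A \models \chi(\bar a)$: there is $\bar d$ with $\psi(\bar d)$ and $a_i = s_i(\bar d)$ for all $i$. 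This $\bar d$ generates $A$ (its term-images include every $a_i$), so by quasi-Hopfianity the map $\beta: a_i \mapsto d_i$ is an automorphism, and $\beta(b_i) = s_i(\beta(\bar a)) = s_i(\bar d) = a_i$, i.e.\ $\bar b = \beta^{-1}(\bar a)$ lies in the orbit; note that $\langle\bar b\rangle = A$ is never proved ``first'' --- it falls out at the end. The paper packages the same trick contrapositively: for each non-generating tuple $\bar c$ satisfying $\psi$, its defining formula contains the single conjunct $\forall \bar y\,\neg(\psi(\bar y) \wedge \bigwedge_i x_i = t_{(\bar c,i)}(\bar y))$, with $t_{(\bar c,i)}$ fixed terms representing $\bar c$, and the composition argument in Lemma~\ref{crucial_lemma} shows these conjuncts hold on the orbit. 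This repair also rescues your effective step, whose ``distinguishing formulas'' inherit the gap: for $\bar b$ satisfying $\psi$ but rejected by your $(\star)$-bounded search, the distinguishing universal formula is exactly the displayed conjunct with terms $t_{(\bar b,i)}$ found by search, hence uniformly computable; the resulting conjunction is computably enumerable, giving the computable $\Pi_1$ definition and thus the computable $d$-$\Sigma_2$ Scott sentence.
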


	\begin{corollary}[\cite{knight1, knight2}]\label{free_groups_corollary} Free groups of finite rank, free abelian groups of finite rank, and the infinite dihedral group have computable $d$-$\Sigma_2$ Scott sentences. 
\end{corollary}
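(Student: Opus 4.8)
The plan is to derive everything from Theorem \ref{main_theorem}. First I would record that each of the three structures is finitely presented, computable, and \emph{Hopfian}: all of them are finitely generated and residually finite, so Mal'cev's theorem applies, and in particular each is quasi-Hopfian. Hence the first assertion of Theorem \ref{main_theorem} immediately yields a (plain) $d$-$\Sigma_2$ Scott sentence in every case, and the entire remaining work consists in producing, for each structure $A$, a finite generating set $X$ of $Aut(A)$ together with a computable $F$ witnessing $(\star)$.

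For the free abelian group $A = \mathbb{Z}^n$ with standard basis $\bar{a} = (e_1, \dots, e_n)$ we have $Aut(A) = GL_n(\mathbb{Z})$, which is generated by the finite set $X$ of elementary transvections and a coordinate sign change; note that $lg_S(\alpha(e_i))$ is exactly the $\ell^1$-norm of the $i$-th column of the matrix of $\alpha$. The point is then that the usual Euclidean/Gaussian reduction of an integer matrix to the identity by elementary operations terminates in a number of steps bounded by a computable function of the absolute values of the entries; reading this reduction backwards expresses $\alpha$ as a product of at most $F(lg_S(\alpha(e_1)), \dots, lg_S(\alpha(e_n)))$ elements of $X$, which is exactly $(\star)$. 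For the infinite dihedral group the computation is entirely explicit: writing $D_\infty = \langle t, \sigma \mid \sigma^2 = 1,\ \sigma t \sigma = t^{-1} \rangle$, every automorphism has the form $t \mapsto t^{\pm 1}$, $\sigma \mapsto t^k \sigma$, so $Aut(D_\infty)$ is itself two-generated (in fact $\cong D_\infty$) and $|k|$ is controlled linearly by $lg_S(\alpha(\sigma))$; this gives a manifestly computable, indeed linear, $F$.

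The substantive case is the free group $A = F_n$, where $\bar{a}$ is a free basis and $Aut(F_n)$ is generated, by Nielsen's theorem, by the finite set $X$ of elementary Nielsen transformations. Since $(\alpha(a_1), \dots, \alpha(a_n))$ is again a basis of $F_n$, Nielsen reduction applies: this tuple can be carried to the standard basis by a sequence of elementary Nielsen transformations, and the length of this sequence is bounded by a computable function of the total length $\sum_i lg_S(\alpha(a_i))$ of the tuple. Running the reduction in reverse writes $\alpha$ as a product of that many generators, so $lg_X(\alpha)$ obeys the required bound and $(\star)$ holds with a computable $F$.

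The main obstacle I anticipate is making the bound in the free-group case genuinely \emph{computable and uniform} rather than merely finite: one must verify that Nielsen (or Whitehead peak-) reduction not only terminates but does so within an explicitly computable number of steps depending only on the lengths $lg_S(\alpha(a_i))$, and that each reduction step costs a bounded number of elements of $X$. Once this quantitative form of Nielsen reduction is in hand, the three verifications of $(\star)$ are complete and Theorem \ref{main_theorem} delivers the computable $d$-$\Sigma_2$ Scott sentences.
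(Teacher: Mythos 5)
Your proposal is correct, and for two of the three structures it is essentially the paper's argument: the paper also reduces everything to verifying $(\star)$ of Theorem~\ref{main_theorem}, notes that all three groups are Hopfian with solvable word problem, and for free groups takes $X$ to be the Nielsen transformations. The ``main obstacle'' you flag in the free-group case is in fact already settled in the literature the paper cites: Nielsen proved (as used in the proof of Theorem~2.6 of \cite{knight2}) the explicit bound $lg_X(\alpha) \leq \sum_{i} lg_S(\alpha(a_i))$, i.e.\ $(\star)$ holds with the linear function $F(m_1, \dots, m_n) = \sum_i m_i$; this is exactly the quantitative form of Nielsen reduction you ask for (each non-reduced step strictly decreases total length), so no new verification is needed. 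For free abelian groups the paper is terser than you are --- it only invokes finite generation of $GL_n(\mathbb{Z})$ by elementary matrices --- so your Euclidean-reduction bound actually supplies detail the paper leaves implicit. The genuine divergence is the infinite dihedral group: the paper does \emph{not} argue directly, but instead views $D_\infty$ as the right-angled Coxeter group $C_2 * C_2$ and quotes Corollary~\ref{Artin_Coxeter}, whose proof goes through the semidirect decomposition $Aut(G) = Spe(G) \rtimes F(\Gamma)$ of Gutierrez--Piggott--Ruane and Laurence's theorem that $Spe(G)$ is generated by partial conjugations with $lg_X(\alpha) \leq |\alpha|$. Your explicit computation ($t \mapsto t^{\pm 1}$, $\sigma \mapsto t^k\sigma$, so $Aut(D_\infty) \cong D_\infty$ with $|k|$ linearly bounded by $lg_S(\alpha(\sigma))$) is more elementary and self-contained; the paper's route costs nothing extra given that Section~\ref{sec_Coxeter} is developed anyway, and illustrates that $D_\infty$ is just one instance of the general graph-product criterion.
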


	\begin{corollary}\label{strongly_rigid_corollary} Let $G$ be a computable quasi-Hopfian finitely presented group and suppose that $Inn(G)$ has finite index in $Aut(G)$. Then $G$ has a computable $d$-$\Sigma_2$ Scott sentence. Thus, every strongly rigid Coxeter group of finite rank has a computable $d$-$\Sigma_2$ Scott sentence. In particular, every strongly $2$-spherical Coxeter group of finite rank has a computable $d$-$\Sigma_2$ Scott sentence, as well as every Coxeter group which acts effectively, properly and cocompactly on the affine \mbox{or hyperbolic
plane.}
\end{corollary}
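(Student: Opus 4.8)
The plan is to verify the hypotheses of the computable half of Theorem~\ref{main_theorem} for $A=G$, and then to read off the Coxeter consequences. Fix a finite generating set $\bar a=(a_1,\dots,a_n)$ of $G$ witnessing quasi-Hopfianity and put $S=\{a_1,\dots,a_n\}$. Since $Inn(G)$ is generated by the inner automorphisms $\gamma_{a_1},\dots,\gamma_{a_n}$ (where $\gamma_g\colon x\mapsto gxg^{-1}$) and has finite index in $Aut(G)$, a Reidemeister--Schreier argument shows that $Aut(G)$ is finitely generated; concretely, if $\beta_1=\mathrm{id},\beta_2,\dots,\beta_k$ is a transversal of $Inn(G)$ in $Aut(G)$, then $X=\{\gamma_{a_1},\dots,\gamma_{a_n},\beta_2,\dots,\beta_k\}$ generates $Aut(G)$. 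Moreover $G$ is computable, so $Aut(G)$ itself has solvable word problem with respect to $X$: an $X$-word $w$ represents the identity iff the (computable) automorphism $\alpha_w$ fixes each $a_i$, which is decidable using the word problem of $G$. In particular $lg_X$ is a computable function on $Aut(G)$.

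Next I would reduce condition $(\star)$ to a single statement about inner automorphisms. Using normality of $Inn(G)$, every $\alpha\in Aut(G)$ factors as $\alpha=\gamma_g\beta_j$ for some $g\in G$ and some $j\le k$, whence $lg_X(\alpha)\le lg_X(\gamma_g)+1$. Writing $\beta_j^{-1}(a_i)=w_{j,i}(\bar a)$ for fixed words $w_{j,i}$ and setting $C=\max_{j,i}|w_{j,i}|$, we get $\gamma_g(a_i)=\alpha(\beta_j^{-1}(a_i))=w_{j,i}(\alpha(a_1),\dots,\alpha(a_n))$, so that $lg_S(\gamma_g(a_i))=lg_S(g a_i g^{-1})\le C\cdot\max_k lg_S(\alpha(a_k))$. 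Thus $(\star)$ for $Aut(G)$ follows once we exhibit a computable $B\colon\omega\to\omega$ with
\[
lg_X(\gamma_g)\ \le\ B\big(\max_i lg_S(g a_i g^{-1})\big)\qquad\text{for all }g\in G,
\]
since then $F(m_1,\dots,m_n):=1+B(C\cdot\max_i m_i)$ does the job.

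The main obstacle is the computability of $B$. Abstractly $B(m):=\max\{\,lg_X(\gamma_g): \max_i lg_S(g a_i g^{-1})\le m\,\}$ is a finite maximum of the computable function $lg_X$, because there are only finitely many tuples $(g a_1 g^{-1},\dots,g a_n g^{-1})$ of bounded length and each determines $\gamma_g$; the difficulty is purely that of recognising, among all length-$\le m$ tuples, those that actually arise as $(g a_i g^{-1})_i$, so as to know when an enumeration of the relevant $\gamma_g$ is complete. This is exactly the (simultaneous) conjugacy problem for $G$: a tuple lies in the $Aut(G)$-orbit of $\bar a$ iff it is simultaneously conjugate to some $\beta_j(\bar a)$. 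Hence the crux is to certify orbit-membership effectively. I expect to resolve it by enumerating the genuine automorphisms $\gamma_{g'}\beta_j$ (each of which is automatically an automorphism, so that no surjectivity test is needed) and computing their $lg_X$-values, using that orbit-membership is decidable in this setting---for instance via solvability of the conjugacy problem, which holds in all our applications---to detect when every relevant orbit point has been produced. This yields $B$ and hence $(\star)$, and the computable half of Theorem~\ref{main_theorem} then delivers the computable $d$-$\Sigma_2$ Scott sentence.

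Finally I would deduce the Coxeter statements. A Coxeter group $W$ of finite rank is finitely presented by its Coxeter matrix, has solvable word and conjugacy problems (Tits), hence is computable, and is residually finite (being linear), hence Hopfian and a fortiori quasi-Hopfian. Strong rigidity means precisely that all Coxeter generating sets of $W$ are conjugate; this yields $Aut(W)=Inn(W)\rtimes D$ with $D$ the group of diagram automorphisms, which is finite since the diagram is finite, so $[Aut(W):Inn(W)]<\infty$. Thus the group statement applies and gives a computable $d$-$\Sigma_2$ Scott sentence. The two special cases then follow by citing the known implications that strongly $2$-spherical Coxeter groups of finite rank are strongly rigid, and that any Coxeter group acting effectively, properly and cocompactly on the affine or hyperbolic plane is strongly rigid.
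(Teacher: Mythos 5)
Your overall architecture is the same as the paper's: the same generating set $X=\{\gamma_{a_1},\dots,\gamma_{a_n}\}\cup\{\beta_1,\dots,\beta_k\}$ (inner automorphisms coming from the generators together with a finite transversal of $Inn(G)$ in $Aut(G)$), reduction to condition $(\star)$ of Theorem~\ref{main_theorem}, and the identical chain of citations for the Coxeter consequences (linearity/residual finiteness for Hopfianity and solvable word problem, strong rigidity giving $[Aut(W):Inn(W)]<\infty$, and the known results that the two special classes are strongly rigid). The divergence is in how $(\star)$ is verified, and this is where your proposal has a genuine gap. The paper simply takes the explicit function $F(m_1,\dots,m_n)=(\sum_{i}m_i)+1$: writing $\alpha=\gamma_g\circ\beta_j$, one has $lg_X(\alpha)\le lg_X(\gamma_g)+1$, and $\gamma_g$ is a product of $lg_S(g')$ generators $\gamma_{a_i}^{\pm 1}$ for any $g'$ with $\gamma_{g'}=\gamma_g$; no search, and in particular no decision procedure, enters the verification. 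You instead route $(\star)$ through the extremal function $B(m)=\max\{lg_X(\gamma_g):\max_i lg_S(ga_ig^{-1})\le m\}$, which is indeed well defined (a finite maximum), but whose computability you can only secure by assuming decidability of orbit membership, i.e.\ a solvable (simultaneous) conjugacy problem. That assumption is not among the hypotheses of the corollary: its first sentence concerns an arbitrary computable quasi-Hopfian finitely presented group with $Inn(G)$ of finite index in $Aut(G)$, and solvability of the word problem does not yield solvability of the conjugacy problem. So, as written, you prove the corollary only with an added hypothesis; this happens to cover the Coxeter applications (Coxeter groups do have solvable conjugacy problem), but not the general statement.

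That said, your detour is not gratuitous: you have put your finger on exactly the point the paper treats as immediate. The natural justification of the paper's choice of $F$ requires the inequality $\min\{lg_S(gz):z\in Z(G)\}\le\sum_i lg_S(\alpha(a_i))$ for $\alpha=\gamma_g\beta_j$, i.e.\ that an element conjugating the fixed short tuple $(\beta_j(a_i))_i$ to the tuple $(\alpha(a_i))_i$ is, modulo the center, no longer than the total length of the target tuple; the paper offers no argument for this, and it is not a formal consequence of the hypotheses (conjugation can compress lengths, e.g.\ in groups with distorted center, although such examples are not known to satisfy the finite-index hypothesis). The correct repair, however, cannot be to import decidability of conjugacy: to prove the corollary as stated you must either justify the paper's linear bound from the given hypotheses or produce some other computable bound for $B$; enumeration of inner automorphisms alone cannot certify that all of them with displacement at most $m$ have been found, which is precisely the obstruction you identified but did not overcome.
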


	In relation to the corollary above, we wish to observe that the strongly rigid Coxeter groups of finite rank have been characterized in \cite{muller}, as a result of a joint effort involving various authors, and that the two specific cases mentioned in the statement of the corollary are just particular cases of this general classification.

	\begin{corollary}\label{Artin_Coxeter} Let $G$ be a finite graph product of primary cyclic groups. Then $G$ has a computable $d$-$\Sigma_2$ Scott sentence. In particular, if $G$ is a right-angled Coxeter group of finite rank, then $G$ has a computable $d$-$\Sigma_2$ Scott sentence. 
\end{corollary}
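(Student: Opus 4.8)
The plan is to verify that a finite graph product $G$ of primary cyclic groups satisfies all the hypotheses of Theorem~\ref{main_theorem}, so that the Scott sentence is delivered by the theorem itself. Write $G = \Gamma\{G_v : v \in V(\Gamma)\}$ with $\Gamma$ a finite simple graph and each $G_v$ cyclic of prime-power order, and take as generating tuple $\bar{a} = (a_1, \ldots, a_n)$ one generator for each vertex group. Since $\Gamma$ and all the $G_v$ are finite, $G$ is a finite graph product of finitely presented groups and is therefore finitely presented; moreover, as graph products of groups with solvable word problem again have solvable word problem (via the graph-product normal form theorem), $G$ has a computable copy in which the radius-$k$ ball and the length function $lg_S$ are uniformly computable. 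Quasi-Hopfianity I would obtain structurally: each finite $G_v$ is residually finite, hence by Green's theorem $G$ is residually finite, and being finitely generated it is Hopfian by Malcev's theorem, so in particular quasi-Hopfian with respect to $\bar{a}$. By the first half of Theorem~\ref{main_theorem} this already yields a $d$-$\Sigma_2$ Scott sentence.

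It remains to produce a finite generating set $X$ of $Aut(G)$ together with a computable $F$ witnessing $(\star)$. For $X$ I would invoke the structure theory of automorphism groups of graph products of abelian groups (Laurence for the right-angled Artin case, Gutierrez-Piggott-Ruane in general): $Aut(G)$ is generated by the graph symmetries of the labelled graph $\Gamma$, the vertex automorphisms $\varphi \in Aut(G_v)$, the dominated transvections $a_v \mapsto a_v a_w$ coming from the domination relations of $\Gamma$, and the partial conjugations of components of $\Gamma$ by vertex generators. As $\Gamma$ is finite and each $Aut(G_v)$ is finite, all four families are finite and their union $X$ is a finite generating set.

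The crux is the effective bound $(\star)$, and here I would appeal to peak reduction. In the form proved for right-angled Artin groups by Day, and in the version appropriate to graph products, every $\alpha \in Aut(G)$ can be driven to a length-minimal representative by a sequence of Whitehead automorphisms drawn from $X$, each applied only when it strictly lowers the total image length $\sum_{i} lg_S(\alpha(a_i))$; the length-minimal automorphisms then form a single orbit under the finite group generated by the graph symmetries and vertex automorphisms. Since each reducing move decreases the total length by at least a fixed amount and is itself a word of bounded length in $X$, the number of moves---and hence $lg_X(\alpha)$---is bounded by a linear function $C \sum_{i} lg_S(\alpha(a_i)) + C'$. Taking $F(k_1, \ldots, k_n) = C(k_1 + \cdots + k_n) + C'$ gives a computable, indeed linear, witness for $(\star)$, and Theorem~\ref{main_theorem} then supplies the computable $d$-$\Sigma_2$ Scott sentence. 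The step I expect to be the main obstacle is exactly verifying that peak reduction survives the passage to primary cyclic vertex groups of possibly different primes: here the admissible transvections are constrained (there are none between vertex groups of coprime order), and one must check that Day's peak-reduction argument, suitably adapted to torsion vertex groups, still furnishes a complete set of length-reducing Whitehead moves lying inside $X$.

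For the stated special case there is nothing further to do: a right-angled Coxeter group of finite rank is precisely the graph product $\Gamma\{\mathbb{Z}/2\mathbb{Z}\}$ over a finite graph, and $\mathbb{Z}/2\mathbb{Z}$ is primary cyclic, so the general statement applies directly.
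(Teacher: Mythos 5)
Your reduction to Theorem~\ref{main_theorem} matches the paper's strategy, and the first half of your argument is sound: finite presentability, solvable word problem (Green), and Hopfianity via residual finiteness plus Malcev all go through, so the non-computable $d$-$\Sigma_2$ Scott sentence is established. The gap is exactly where you flag it yourself: condition $(\star)$. Your bound rests on a peak-reduction theorem for graph products of primary cyclic groups, which you do not prove and which is not available off the shelf. Day's peak reduction is a theorem about right-angled Artin groups; its adaptation to graph products with torsion vertex groups of mixed prime-power orders (where transvections are constrained by divisibility of the orders) is precisely the unverified step, and with it both the claim that length-minimal automorphisms form a single orbit under a finite subgroup and the linear bound on the number of Whitehead moves remain assertions. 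As written, the computability half of the corollary is not established.

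The paper sidesteps this machinery entirely, and the torsion that makes your peak-reduction step doubtful is exactly what makes the paper's shortcut work. Since every vertex group is finite, the abelianization $G_{ab}$ is finite, hence the complement $F(\Gamma)$ in the Gutierrez--Piggott--Ruane decomposition $Aut(G) = Spe(G) \rtimes F(\Gamma)$ (Fact~\ref{semi-direct}) is a \emph{finite} group: one simply puts all of its elements into $X$, with no need to control their word length at all. What is left is the special part $Spe(G)$, and for that Laurence's theorem (Fact~\ref{laurence}) already supplies both the finite generating set (the partial conjugations) and the explicit length bound $lg_X(\alpha) \leq |\alpha|$, which gives $(\star)$ with the computable function $F(m_1, \ldots, m_n) = \left(\sum_{i \in [1,n]} m_i\right) + 1$. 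No transvections, Whitehead moves, or peak reduction enter anywhere. To repair your proof, either establish the peak-reduction statement for this class (a genuine research task), or replace that step by the appeal to Facts~\ref{semi-direct}~and~\ref{laurence} as the paper does.
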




We conjecture that the methods from \cite{laurence_artin} combined with our general results yields that every right-angled Artin group of finite rank also has a has a computable $d$-$\Sigma_2$ Scott sentence, but this is out of the scope of the present paper.

	\begin{corollary}\label{cor_free_planes} Let $\pi^4$ be the free projective plane of rank $4$ (cf. \cite{hall}). Then $\pi^4$ is computable and has a computable $d$-$\Sigma_2$ Scott sentence.
\end{corollary}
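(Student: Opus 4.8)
The plan is to realize $\pi^4$ as a quasi-Hopfian, finitely presented, computable structure and then to verify condition $(\star)$ for it, so that both halves of Theorem~\ref{main_theorem} apply. First I would fix the language of projective planes (two unary predicates distinguishing points from lines, together with an incidence relation) and present $\pi^4$ as the free completion, in the sense of \cite{hall}, of the open configuration consisting of four points in general position (the rank-$4$ generating configuration); the generating tuple is $\bar{a} = (a_1, a_2, a_3, a_4)$, the four generic points. This is finitely presented: the only relations imposed on $\bar{a}$ are the finitely many non-collinearity conditions, and every other element of $\pi^4$ is forced by the partial operations of join and meet dictated by the plane axioms. That $\pi^4$ is quasi-Hopfian with respect to $\bar{a}$ but not Hopfian is exactly the content of \cite{johnson, sandler_hopfian}, so the first part of Theorem~\ref{main_theorem} already yields a $d$-$\Sigma_2$ Scott sentence (via \cite{alvir}); the remaining work is to make it computable.

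Second, I would check that $\pi^4$ is computable. Hall's free closure proceeds in stages: at stage $0$ we have the four generators, and at each successive stage we adjoin a fresh line for every pair of points not yet jointly incident to a line and a fresh point for every pair of lines not yet meeting. Only finitely many elements enter at each stage, and since the construction never identifies elements already present, we may code the elements of $\pi^4$ injectively by natural numbers as they are created. Incidence between two elements is settled at the first stage at which both occur and is never altered afterwards, so incidence and equality are both decidable; hence $\pi^4$ has a computable presentation.

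Third, and this is where the real obstacle lies, I would analyse $Aut(\pi^4)$ and verify $(\star)$. The key claim is that $Aut(\pi^4)$ is finite. The reason is that Hall's stage stratification is intrinsic: the stage of an element of the free plane is recoverable from the abstract incidence structure, so every automorphism preserves it. In particular every automorphism permutes the four stage-$0$ elements, i.e.\ acts on the generating quadrangle, and since an automorphism commutes with join and meet it is determined by its action on the generators; this gives an embedding $Aut(\pi^4) \hookrightarrow S_4$, and conversely every permutation of four generic points extends, so $Aut(\pi^4) \cong S_4$. Granting this, $(\star)$ is immediate: take $X = Aut(\pi^4)$, which is finite and generates $Aut(\pi^4)$, and let $F$ be the constant function $1$, so that $lg_X(\alpha) \leq 1 = F(lg_S(\alpha(a_1)), \dots, lg_S(\alpha(a_4)))$ for every $\alpha$. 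The second part of Theorem~\ref{main_theorem} then delivers a computable $d$-$\Sigma_2$ Scott sentence for $\pi^4$. The main difficulty I anticipate is the rigorous justification that the stage function is definable from the incidence structure, equivalently that automorphisms cannot carry generators to deeper points; this is precisely the phenomenon that fails for surjective endomorphisms and is responsible for the non-Hopfianity of $\pi^4$, so some care is needed to separate the two behaviours.
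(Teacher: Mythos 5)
Your first two steps (finite presentation plus quasi-Hopfianity via \cite{johnson, sandler_hopfian}, and computability via the stage-by-stage free completion) are essentially in line with the paper, which handles them in Fact~\ref{hopfian_fact} and Proposition~\ref{computable_fact} (the latter citing \cite{kobaev} and \cite{nikitin} rather than arguing directly). The third step, however, contains a genuine error: $Aut(\pi^4)$ is \emph{not} finite, and the claim that the stage stratification is automorphism-invariant is false. By Sandler's results \cite{sandler}, the collineation group of $\pi^4$ is the infinite group $\langle \theta_1, \theta_2, \phi \rangle$, where $\theta_1, \theta_2$ generate a copy of $S_4$ permuting the four generating points but $\phi$ is an involution sending the generator $A_2$ to the diagonal point $a_1 = (A_1 \vee A_2) \wedge (B_1 \vee B_2)$ and $B_2$ to $a_2 = (A_1 \vee B_1) \wedge (A_2 \vee B_2)$ --- both stage-$2$ elements. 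Thus automorphisms can, and do, carry generators to deeper points: the quadrangle $A_1, a_1, B_1, a_2$ also freely generates $\pi^4$, and the stage function is not recoverable from the abstract incidence structure. (This is exactly the phenomenon you flagged as a ``difficulty'' at the end; it is not a difficulty to be overcome but a fact that refutes your embedding $Aut(\pi^4) \hookrightarrow S_4$. Note also that if your claim were true, $\pi^4$ would satisfy a strong form of rigidity quite at odds with its failure of Hopfianity.)

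Because the group is infinite, verifying condition $(\star)$ genuinely requires quantitative input, and this is where the paper's proof differs: it takes $X = \{\theta_1, \theta_2, \phi\}$ and uses Sandler's theorem (Fact~\ref{sandler_fact}) that every collineation of stage $n$ has length $\leq n$, where the length of $\alpha \in Aut(\pi^4)$ is the number of occurrences of $\phi$ in a word $P_{i_1} \circ \phi \circ P_{i_2} \circ \cdots \circ P_{i_n} \circ \phi \circ P_{i_{n+1}}$ with $P_{i_j} \in S_4$. Since the stage of $\alpha(a_i)$ is bounded computably in terms of $lg_S(\alpha(a_i))$, the function $F(m_1, \dots, m_4) = \sum_{i} 2(m_i + 1)$ witnesses $(\star)$, and Theorem~\ref{main_theorem} then yields the computable $d$-$\Sigma_2$ Scott sentence. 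Without some such length-versus-stage bound (or another computable description of $Aut(\pi^4)$), the trivial choice $F \equiv 1$ cannot work, so your proposal as written does not go through.
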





	In Section~\ref{sec_proof} we introduce the necessary notation and then prove Theorem~\ref{main_theorem} and Corollaries~\ref{free_groups_corollary}~and~\ref{strongly_rigid_corollary}. In Section~\ref{sec_Coxeter} we introduce Coxeter groups and graph products of primary cyclic groups and prove what is needed to establish Corollary~\ref{Artin_Coxeter}. In Section~\ref{sec_planes} we introduce free projective planes and prove Corollary~\ref{cor_free_planes}.

\section{Proof of Main Theorem}\label{sec_proof}

	Our definition of finitely presented structure is standard, so we write $A = \langle \bar{a} \mid \varphi_1(\bar{a}), ..., \varphi_n(\bar{a}) \rangle$, where, for all $i \in [1, n]$, the formulas $\varphi_i(\bar{a})$ are assumed to be atomic $L$-formulas, for details see e.g. \cite[Section~9.2]{hodges} where this is explained and justified with care. Concerning the notions of length of an $L$-term and of length of an element $a$ of an $L$-structure $A$ with respect to a generating set $X$ for $A$, denoted as $lg_X(a)$, any reasonable definition (e.g. \cite[Chapter~1]{hodges}) makes our theorems true and so we prefer to remain vague. On the other hand, when dealing with groups or other particular structures, where the exact notion we choose might be relevant for the statements of the corresponding results, we use the notion of length established in that area of research (most notably we will do this for group theory).
	
	\begin{notation} Let $L$ be a finite language and $A$ a finitely generated $L$-structure. For the rest of this section we will assume that $A$ is finitely presented and we will fix one such presentation, and denote it as $A = \langle \bar{a} \mid \varphi_1(\bar{a}), ..., \varphi_n(\bar{a}) \rangle$, where, for all $i \in [1, n]$, the formulas $\varphi_i(\bar{a})$ are assumed to be atomic $L$-formulas.
\end{notation}

	\begin{notation}\label{isolation_notation} Let $A = \langle \bar{a} \mid \varphi_1(\bar{a}), ..., \varphi_n(\bar{a}) \rangle$, with $\bar{a} = (a_1, ..., a_m)$, all the $a_i$'s distinct and $\varphi_1(\bar{x}), ..., \varphi_n(\bar{x})$ a sequence of positive atomic formulas specifying a presentation of $A$ in the generators $\bar{a}$. Let $\psi(\bar{x})$ be the following formula:
	$$\bigwedge_{i \in [1, n]} \varphi_i(\bar{x}) \wedge \bigwedge_{i \neq j \in [1, n]} x_i \neq x_j.$$
Let then $X_*$ be the collection of $m$-tuples $\bar{b}$ of distinct element of $A$ such that:
\begin{enumerate}[(i)]
	\item $A \models \psi(\bar{b})$;
	\item $\langle \bar{b} \rangle_A \neq A$.
\end{enumerate}
For every $\bar{b} \in X_*$ fix terms $t_{(\bar{b}, 1)}(\bar{x}), ..., t_{(\bar{b}, m)}(\bar{x})$ s.t.
$A \models \bigwedge_{i \in [1, m]} b_i = t_{(\bar{b}, i)}(\bar{a})$.
\end{notation}

	\begin{remark} In the context of Notation~\ref{isolation_notation}, notice that if $A \models \psi(\bar{b})$, then the map $\bar{a} \mapsto \bar{b}$ is injective and it extends uniquely to an homomorphism of $A$.
\end{remark}

	\begin{lemma}\label{crucial_lemma} In the context of Notation~\ref{isolation_notation}, so that $A = \langle \bar{a} \rangle_A$, assume that $A$ is quasi-Hopfian, then the $Aut(A)$-orbit of $\bar{a}$ is defined in $A$ by the $\Pi_1$-formula $\Theta(\bar{x})$:
	$$\psi(\bar{x}) \wedge \bigwedge_{\bar{b} \in X_*} \forall \bar{y} \neg (\psi(\bar{y}) \wedge \bigwedge_{i \in [1, n]} x_i = t_{(\bar{b}, i)}(\bar{y})).$$
\end{lemma}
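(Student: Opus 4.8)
The plan is to prove that the solution set $E=\{\bar{c}: A\models\Theta(\bar{c})\}$ coincides with the $Aut(A)$-orbit of $\bar{a}$, after first recording two soft facts. Syntactically, $\Theta$ is $\Pi_1$: the conjunct $\psi$ is quantifier-free, each conjunct $\forall\bar{y}\,\neg(\cdots)$ is a universal quantification of a quantifier-free matrix, and a (possibly infinite) conjunction of such formulas is $\Pi_1$ by the definition in the introduction. Semantically, $E$ is $Aut(A)$-invariant, since $\Theta$ is an $\mathfrak{L}_{\omega_1,\omega}$-formula and automorphisms preserve satisfaction; consequently the inclusion $\mathrm{orbit}(\bar{a})\subseteq E$ reduces to the single assertion $A\models\Theta(\bar{a})$.

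First I would establish $E\subseteq\mathrm{orbit}(\bar{a})$, which I expect to be the clean direction. Given $\bar{c}\in E$, the conjunct $\psi(\bar{c})$ says that the $c_i$ are distinct and satisfy the defining relations, so by the Remark the assignment $\bar{a}\mapsto\bar{c}$ extends to a homomorphism $f$ that is injective on $\bar{a}$. I claim $f$ is surjective: otherwise $\langle\bar{c}\rangle_A\neq A$, so $\bar{c}$ itself lies in $X_*$, and then the conjunct of $\Theta$ indexed by $\bar{b}=\bar{c}$ is refuted at the witness $\bar{y}=\bar{a}$, since $A\models\bigwedge_i\varphi_i(\bar{a})$ and $c_i=t_{(\bar{c},i)}(\bar{a})$ by the defining property of the chosen terms --- contradicting $\bar{c}\in E$. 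Hence $f$ is a surjective homomorphism injective on $\bar{a}$, so quasi-Hopfianity (taking $\bar{a}$ to be the witnessing generating set) yields $f\in Aut(A)$, and $\bar{c}=f(\bar{a})$ lies in the orbit of $\bar{a}$.

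It remains to prove $A\models\Theta(\bar{a})$, and this is the step I expect to be the main obstacle. The conjunct $\psi(\bar{a})$ is immediate. Fixing $\bar{b}\in X_*$, suppose toward a contradiction that some $\bar{d}$ witnesses the inner existential, so $A\models\bigwedge_i\varphi_i(\bar{d})$ and $a_i=t_{(\bar{b},i)}(\bar{d})$ for all $i$. Then $\bar{a}\mapsto\bar{d}$ extends to a homomorphism $h$, and the computation $h(b_i)=h(t_{(\bar{b},i)}(\bar{a}))=t_{(\bar{b},i)}(\bar{d})=a_i$ shows that $h$ carries $\bar{b}$ onto the generating tuple $\bar{a}$, so $h$ is surjective; moreover the homomorphism $g\colon\bar{a}\mapsto\bar{b}$ supplied by the Remark satisfies $h\circ g=\mathrm{id}_A$, while $\mathrm{im}(g)=\langle\bar{b}\rangle_A\neq A$ because $\bar{b}\in X_*$. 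The goal is to convert this splitting into a contradiction through quasi-Hopfianity: if $h\in Aut(A)$ then $g=h^{-1}$ is onto, contradicting $\langle\bar{b}\rangle_A\neq A$.

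The delicate point, and the heart of the lemma, is to see that quasi-Hopfianity actually applies to $h$, i.e. that $h$ is injective on $\bar{a}$ (equivalently, that $A$ is not isomorphic to a proper retract of itself). The natural mechanism is minimality of the generating tuple: since $h$ is surjective one has $\langle\bar{d}\rangle_A=A$, so a repetition among the entries of $\bar{d}=h(\bar{a})$ would exhibit $A$ as generated by fewer than $|\bar{a}|$ elements; if $\bar{a}$ is a minimal-cardinality generating set this is impossible, whence $\bar{d}$ is distinct, $h$ is injective on $\bar{a}$, quasi-Hopfianity gives $h\in Aut(A)$, and the contradiction follows. I would therefore arrange the fixed presentation to use a minimal generating tuple witnessing quasi-Hopfianity. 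The residual subtlety --- whether such a choice is always available, or whether one must instead argue directly that a quasi-Hopfian finitely presented structure admits no proper self-retract (note that for Hopfian $A$ this is automatic, since a surjective endomorphism is then injective, but in the merely quasi-Hopfian case, as for the free projective plane, it requires a genuine argument) --- is precisely where I expect the substance of the proof to lie, and is what I would spend the most effort nailing down.
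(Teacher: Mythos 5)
Your skeleton is the paper's: the inclusion $E\subseteq\mathrm{orbit}(\bar{a})$ is verbatim the paper's ``left-to-right'' argument, and your treatment of the other inclusion (reduce by automorphism-invariance to $A\models\Theta(\bar{a})$, build $h\colon a_i\mapsto d_i$ with $h\circ g=\mathrm{id}_A$, and contradict non-surjectivity of $g$) is the same argument the paper runs via the conjugation computation $(\beta^{-1}\circ\gamma\circ\beta)(a_i)=c_i$, just packaged as a splitting. Moreover, the ``delicate point'' you isolate --- that quasi-Hopfianity can only be applied to $h$ once one knows the entries of the witness tuple $\bar{d}$ are pairwise distinct, which the matrix $\bigwedge_i\varphi_i(\bar{y})$ of the displayed formula does not guarantee --- is a correct diagnosis, and it is exactly the point where the paper's own write-up is loose.

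Where your proposal genuinely falls short is the repair. The paper resolves this not by any algebraic argument but syntactically: in its proof of the right-to-left direction it takes the witness to satisfy $\psi(\bar{y})$ (the relations \emph{and} the inequations $y_i\neq y_j$), i.e.\ it reads the universally quantified matrix of $\Theta$ as $\psi(\bar{y})$ rather than $\bigwedge_i\varphi_i(\bar{y})$; the displayed formula in the lemma is best regarded as having this typo, since the paper's equation for the witness asserts $A\models\psi(\bar{d})$, which otherwise does not follow. With the inequations inside the scope of $\forall\bar{y}$, your argument closes immediately: $\bar{d}$ has distinct entries, so $h$ is injective on $\bar{a}$ and surjective, quasi-Hopfianity gives $h\in Aut(A)$, and $g=h^{-1}$ is onto, a contradiction. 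This amendment costs nothing: inequations are quantifier-free, so $\Theta$ remains $\Pi_1$, and the inclusion $E\subseteq\mathrm{orbit}(\bar{a})$ is unaffected because there the witness is $\bar{y}=\bar{a}$, which does satisfy $\psi$. By contrast, your proposed fix via a minimal-cardinality generating tuple is not available in this setting: the tuple $\bar{a}$ is fixed as the witness of quasi-Hopfianity (your clean direction also uses this), quasi-Hopfianity with respect to one generating tuple does not transfer to another, and ruling out proper self-retracts of a merely quasi-Hopfian structure --- which is what the formula exactly as displayed would require --- is precisely the kind of statement one should not expect to prove in this generality. So: right structure, right identification of the crux, but the fix is to move the distinctness conjuncts into the universal matrix, not to change the generating tuple.
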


	\begin{proof} We want to show that $\bar{b} = (b_1, ..., b_n) \models \Theta(\bar{x})$ if and only if there exists $\alpha \in Aut(A)$ such that $\alpha(\bar{a}) = \bar{b}$.
Concerning the implication ``left-to-right'', suppose that $A \models \Theta(\bar{b})$. It suffices to show that $\bar{b} \notin X_*$, since then the map $f: A \rightarrow A$ which maps $a_i \mapsto b_i$ is on one hand surjective (recall the definition of $X_*$) and on the other hand injective on $\bar{a}$ (recall that $\psi(\bar{x})$ is a conjunct of $\Theta(\bar{x})$), and so, by quasi-Hopfianity, $f$ is an automorphism of $A$. Suppose that $\bar{b} \in X_*$, then we have:
$$A \models \psi(\bar{a}) \wedge \bigwedge_{i \in [1, n]} b_i = t_{(\bar{b}, i)}(\bar{a}),$$
contradicting the fact that $\bar{b} \models \Theta(\bar{x})$.
Concerning the implication ``right-to-left'', let $\alpha \in Aut(A)$ and let $\bar{b} = \alpha(\bar{a})$, we want to show that $A \models \Theta(\bar{b})$. Clearly, $A \models \psi(\bar{b})$. For the sake of contradiction, suppose that for some $\bar{c} \in X_*$ we have that:
$$A \models \exists \bar{y} (\psi(\bar{y}) \wedge \bigwedge_{i \in [1, n]} b_i = t_{(\bar{c}, i)}(\bar{y})).$$
Then there exists $\bar{d} \in A$ such that:
\begin{equation}\label{eq} A \models \psi(\bar{d}) \wedge \bigwedge_{i \in [1, n]} b_i = t_{(\bar{c}, i)}(\bar{d}).
\end{equation}
But then, since $A = \langle \bar{a} \rangle_A$, $\alpha \in Aut(A)$ and $\bar{b} = \alpha(\bar{a})$, by the second conjunct of (\ref{eq}) we have that $\langle \bar{d} \rangle_A = A$, and so by the quasi-Hopfianity of $A$ we have that:
$$\beta: a_i \mapsto d_i \in Aut(W).$$
Furthermore:
$$\gamma: d_i \mapsto t_{(\bar{c}, i)}(\bar{d}) = b_i \in Aut(W).$$
Hence, we have:
$$\begin{array}{rcl}
(\beta^{-1} \circ \gamma \circ \beta)(a_i) & = & (\beta^{-1} \circ \gamma)(d_i)\\
 & = & \beta^{-1}(t_{(\bar{c}, i)}(d_1, ..., d_{n}))\\
 & = & t_{(\bar{c}, i)}(\beta^{-1}(d_1), ..., \beta^{-1}(d_{n}))) \\
 & = & t_{(\bar{c}, i)}(a_1, ..., a_{n}) \\
 & = & c_i. \\
\end{array}$$
and so the map $a_i \mapsto c_i = t_{(\bar{c}, i)}(\bar{a}) \in Aut(A)$, contradicting the fact that $\bar{c} \in X_*$.
\end{proof}

	\begin{proof}[Proof of Theorem~\ref{main_theorem}] The first claim is by Lemma~\ref{crucial_lemma}. We show that under the additional assumptions we have that the $Aut(A)$-orbit of $(a_1, ..., a_n)$ in $A$ is definable by a {\em computable} $\Pi_1$-formula. It suffices to show that in this case the conjunction:
$$\bigwedge_{\bar{b} \in X_*} \forall \bar{y} \neg (\bigwedge_{i \in [1, n]} \varphi_i(\bar{y}) \wedge \bigwedge_{i \in [1, n]} x_i = t_{(\bar{b}, i)}(\bar{y})).$$
is computably enumerable. To this extent, let $X_+ = \{ \bar{b} \in A^n : A \models \psi(\bar{b}) \}$. We exhibit an algorithmic procedure which takes as input tuples $\bar{b} \in X_+$ and answers $\bf{YES}$ if $\bar{b} \in X_*$ and answers $\bf{NO}$ if $\bar{b} \in X_+ \setminus X_{*}$. Let $\bar{b} \in X_+$ and fix terms $t_{(\bar{b}, 1)}(\bar{x}), ..., t_{(\bar{b}, n)}(\bar{x})$ such that $A \models \bigwedge_{i \in [1, n]} b_i = t_{(\bar{b}, i)}(\bar{a})$. Notice that as the language is finite there are only finitely many terms of each length. Since $A$ is computable we can assume without loss of generality that for every $i \in [1, n]$ we have that $lg_{S}(b_i) \leq lg(t_{(\bar{b}, i)}(\bar{x}))$ (recall that $S = \{a_1, ..., a_n \}$). Now, let
$k = F(lg_{S}(b_1), ..., lg_{S}(\alpha(b_n)))$
and enumerate all the elements $\alpha \in Aut(A)$ such that $\alpha = \alpha_1^{\pm 1} \circ \cdots \circ \alpha_m^{\pm 1}$, with $\alpha_i \in X$ and $m \leq k$, and call the resulting finite collection of automorphisms $B_0$. Then in order to decide if $\bar{b} \in X_*$ or not it suffices to check if for some $\beta \in B_0$ we have that: 
$$\beta(a_1) = t_{(\bar{b}, 1)}(\bar{a}) = b_1, ..., \beta(a_n) = t_{(\bar{b}, n)}(\bar{a}) = b_n,$$
and this is a computable task, since $A$ is assumed to be a computable structure.
\end{proof}

\begin{proof}[Proof of Corollary~\ref{strongly_rigid_corollary}] Let $G = \langle S\rangle_G$ be as in the assumption of the corollary. Let $\beta_1, ..., \beta_k$ be representatives of the cosets of $Inn(G)$ in $Aut(G)$ and let $\alpha_1, ..., \alpha_n$ be the inner automorphisms corresponding to the generators in $S$ (so that the automorphisms $\alpha_1, ..., \alpha_n$ generate $Inn(G)$). Then letting:
$$X = \{\alpha_1, ..., \alpha_n\} \cup \{\beta_1, ..., \beta_k\} \subseteq Aut(G),$$
and $F: \omega^{n} \rightarrow \omega$ be the following function\footnote{We are not interested here in optimal functions $F$ that make $(\star)$ true.}: $F(m_1, ..., m_n) = (\sum_{i \in [1, n]} m_i) + 1$, we have that condition $(\star)$ of Theorem~\ref{main_theorem} is verified for this choice of $X$ and $F$.

\smallskip

\noindent Concerning the claims about Coxeter groups, this is by Fact~\ref{fact1} and \cite{muller}, specifically in \cite[page 539, line -10]{muller} it is observed that the result of \cite{muller} imply that strongly $2$-spherical Coxeter groups are strongly rigid, and it is well-known that if a finitely gen. Coxeter group $W$ is strongly rigid, then $Inn(W)$ has finite index in $Aut(W)$.
\end{proof}

	\begin{proof}[Proof of Corollary~\ref{free_groups_corollary}] It is well-known that these groups are Hopfian and have solvable word problem. Thus, to conclude, it suffices to show that the assumptions of Theorem~\ref{main_theorem} are satisfied. Concerning the case of free groups, take as $X$ the set of Nielsen transformations and let $F(m_1, ..., m_n) = \sum_{i \in [1, n]} m_i$; then, as noted in the proof of \cite[Theorem~2.6]{knight2}, Nielsen proved that condition $(\star)$ of Theorem~\ref{main_theorem} is verified for this choice of $X$ and $F$. Concerning the case of free abelian groups, this follows from the fact that $Aut(\mathbb{Z}^n)$ is the set of $n \times n$ invertible $\mathbb{Z}$-matrices, and this group is generated by certain finitely many matrices (see e.g. \cite[Appendix~C]{elman}). The claim about the infinite dihedral group is by Corollary~\ref{Artin_Coxeter}.
\end{proof}

\section{Coxeter Groups and Graph Products of Groups}\label{sec_Coxeter}

	In this section we deal with applications to Coxeter groups, a class of groups that arises in a multitude of ways in several areas of mathematics, such as algebra \cite{humphreys}, geometry \cite{davis} and combinatorics \cite{brenti}. We now define what a Coxeter group is.

\begin{definition}[Coxeter groups]\label{def_Coxeter_groups} Let $S$ be a set. A matrix $m: S \times S \rightarrow \{1, 2, . . . , \infty \}$ is called a {\em Coxeter matrix} if it satisfies:
	\begin{enumerate}[(1)]
	\item $m(s, s') = m(s' , s)$;
	\item $m(s, s') = 1 \Leftrightarrow s = s'$.
	\end{enumerate}
For such a matrix, let $S^2_{*} = \{(s, s') \in S^2 : m(s, s' ) < \infty \}$. A Coxeter matrix $m$ determines
a group $W$ with presentation:
$$
\begin{cases} \text{Generators:} \; \;  S \\
				\text{Relations:} \; \;   (ss')^{m(s,s')} = e, \text{ for all } (s, s' ) \in S^2_{*}.
\end{cases} $$
A group with a presentation as above is called a Coxeter group, and the pair $(W, S)$ is a called a Coxeter system. The rank of the Coxeter system $(W, S)$ is $|S|$. The rank of the group $W$ is the rank of any Coxeter system $(W, S)$ with $|S|$ minimal. In this paper we are only interested in Coxeter groups of finite rank.
\end{definition}

	\begin{notation}\label{def_Coxeter_graph} In the context of Definition~\ref{def_Coxeter_groups}, the Coxeter matrix $m$ is often equivalently represented by a labeled graph $\Gamma$ whose node set is $S$ and whose edges are the pairs $\{s, s' \}$ such that $m(s, s') < \infty$, with label $m(s, s')$. Notice that some authors consider instead the graph $\Delta$ such that $s$ and $s'$ are adjacent iff $m(s, s ) \geq 3$. In order to try to avoid confusion we refer to the first graph as the Coxeter graph of $(W, S)$ (and usually denote it with the letter $\Gamma$), and to the second graph as the Coxeter diagram of $(W, S)$ (and usually denote it with the letter $\Delta$).
\end{notation}

\begin{definition}\label{def_irreducible} Let $(W, S)$ be a Coxeter system with Coxeter diagram $\Delta$ (recall Notation~\ref{def_Coxeter_graph}). We say that $(W, S)$ is irreducible if $\Delta$ is connected.
\end{definition}

	\begin{definition}[Right-angled Coxeter and Artin groups]\label{def_Artin_Coxeter} Let $m$ be a Coxeter matrix and let $W$ be the corresponding Coxeter group. We say that $W$ is right-angled if the matrix $m$ has values in the set $\{ 1, 2, \infty\}$. In this case the Coxeter graph $\Gamma$ associated to $m$ is simply thought as a graph (instead of a labeled graph), whith edges corresponding to the pairs $\{ s, s' \}$ such that $m(s, s') = 2$. A right-angled Artin group is defined as in the case of right-angled Coxeter groups with the omission in the defining presentation of the requirement that generators have order~$2$.
\end{definition}

	Artin groups will not play a role in the rest of the paper, we gave the definition of right-angled Artin groups in Definition~\ref{def_Artin_Coxeter} to give context to the conjecture made before Corollary~\ref{cor_free_planes}, i.e. that our methods might apply also to these structures.

	\begin{definition}[Strongly $2$-spherical Coxeter groups] Let $(W, S)$  be a Coxeter system of finite rank with Coxeter matrix $m$. We say that the Coxeter system $(W, S)$ is $2$-spherical if $m$ has only finite entries. We say that $(W, S)$ is strongly $2$-spherical if in addition $W$ is not finite and $(W, S)$ is irreducible. We say that the Coxeter group $W$ is $2$-spherical (resp. strongly $2$-spherical) if there is $S \subseteq W$ such that $(W, S)$ is a $2$-spherical (resp. strongly $2$-spherical) Coxeter system. 
\end{definition}

	It is a stadard fact that a finitely generated group is computable if and only if it has solvable word problem \cite{rabin} -- this is why we state Facts~\ref{fact1}~and~\ref{fact_hop_graph_product}.

\begin{fact}\label{fact1} Coxeter groups of finite rank are Hopfian and have \mbox{solvable word problem.}
\end{fact}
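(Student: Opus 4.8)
The plan is to deduce both properties from the \emph{Tits (geometric) representation} of a Coxeter group, which realizes $W$ as a reflection group acting on a finite-dimensional real vector space. Fix a Coxeter system $(W, S)$ with $|S| = n < \infty$ and Coxeter matrix $m$. Recall the Tits bilinear form $B$ on $\mathbb{R}^n$ defined on a basis $\{\alpha_s : s \in S\}$ by $B(\alpha_s, \alpha_s) = 1$ and $B(\alpha_s, \alpha_{s'}) = -\cos(\pi/m(s,s'))$ when $m(s,s') < \infty$, and $B(\alpha_s, \alpha_{s'}) = -1$ when $m(s,s') = \infty$. Sending each $s \in S$ to the $B$-reflection $\sigma_s(v) = v - 2 B(\alpha_s, v)\,\alpha_s$ defines a faithful homomorphism $\rho \colon W \hookrightarrow GL_n(\mathbb{R})$ (see \cite{humphreys}). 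Two features of $\rho$ drive the whole argument: it is injective, and all matrix entries of the generators $\rho(s)$ lie in the real number field $K = \mathbb{Q}(\{\cos(\pi/m(s,s')) : m(s,s') < \infty\})$, which is a finite extension of $\mathbb{Q}$ and hence a \emph{computable} field with decidable arithmetic and equality.

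For Hopfianity I would invoke two classical theorems of Malcev. First, every finitely generated linear group over a field is residually finite; since $\rho$ exhibits $W$ as a finitely generated subgroup of $GL_n(K)$, the group $W$ is residually finite. Second, every finitely generated residually finite group is Hopfian: a finitely generated group has only finitely many subgroups of each finite index, and combining this counting fact with residual finiteness shows that any surjective endomorphism $f \colon W \twoheadrightarrow W$ with nontrivial kernel leads to a contradiction (the map $H \mapsto f^{-1}(H)$ on index-$d$ subgroups is injective, hence bijective by finiteness, yet a finite-index subgroup avoiding a fixed nontrivial element of $\ker f$ cannot lie in its image). Combining the two conclusions gives that every Coxeter group of finite rank is Hopfian.

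For the word problem I would use the faithfulness and computability of $\rho$ directly. Since every generator is an involution we may take $S = S^{-1}$, so an arbitrary word is of the form $w = s_1 \cdots s_k$ with $s_i \in S$. The product $\rho(s_1) \cdots \rho(s_k)$ is a matrix over $K$ that can be computed explicitly, and $w =_W e$ if and only if this product equals the identity matrix. As $K$ has decidable equality, comparing the resulting matrix with the identity is an effective test, so $W$ has solvable word problem; by the theorem of Rabin cited in \cite{rabin} this is equivalent to $W$ being computable.

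The main obstacle, and the only point requiring genuine care, is the verification that the coefficient field $K$ is computable: one must check that the numbers $\cos(\pi/m(s,s'))$ are real algebraic (they are, being roots of Chebyshev-type polynomials with rational coefficients), that the convention $B(\alpha_s,\alpha_{s'}) = -1$ for $m(s,s') = \infty$ keeps $\rho$ faithful while keeping all entries inside a number field, and that equality of elements of a fixed number field is decidable. All of these are standard, but they are exactly what legitimizes the passage from ``linear over $\mathbb{R}$'' to ``effectively linear over a computable field''; everything else in the argument is purely algebraic and does not interact with computability.
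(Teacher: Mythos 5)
Your proposal is correct. For the Hopfian half it is essentially the paper's own argument: the paper also observes that finite-rank Coxeter groups are linear over $\mathbb{R}$, hence residually finite by Malcev, hence Hopfian (Malcev again, using finite generation); you merely spell out the two Malcev theorems that the paper leaves as citations. For the word problem, however, you take a genuinely different route. The paper points to Tits' original, purely combinatorial solution (the algorithm based on braid moves and deletion of subwords $ss$, as presented in Section~3.4 of \cite{davis}), whereas you extract decidability directly from the geometric representation: the Tits representation is faithful, its matrix entries lie in the number field $K = \mathbb{Q}(\{\cos(\pi/m(s,s')) : m(s,s') < \infty\})$, and equality in a number field is decidable, so one decides $w =_W e$ by multiplying matrices and comparing with the identity. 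Your approach buys uniformity --- a single object, the faithful linear representation over a computable field, yields both residual finiteness and the decision procedure --- and it generalizes verbatim to any finitely generated group given by explicit matrices over a computable field with decidable equality. What it costs is the computable-algebra overhead you correctly flag as the delicate point: one must effectively present $K$ (minimal polynomials of the $\cos(\pi/m)$'s, formation of the compositum, identification of the intended real embedding) before ``decidable equality'' is legitimate, whereas Tits' combinatorial algorithm works directly on words in the generators and needs no arithmetic at all. Both proofs are sound; since the statement is used in the paper only to feed Theorem~\ref{main_theorem} (via Rabin's equivalence of solvable word problem and computability), either one suffices.
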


	\begin{proof} As well-known such groups are linear groups over the real numbers and thus residually finite, and in particular Hopfian. The solvability of the word problem is also well-known (first proved by Tits), see e.g. \cite[Section~3.4]{davis} for a reference.
\end{proof}

	\begin{definition} Let $(W, S)$  be a Coxeter system. We say that $W$ is a strongly rigid Coxeter group if for every $T \subseteq W$ such that $(W, T)$ is a Coxeter system there exists $w \in W$ such that $T = S^w$, where $S^w$ stands for $\{wsw^{-1} : s \in S \}$.
\end{definition}

\begin{definition}\label{def_cyclic_prod} Let $\Gamma = (V, E)$ be a graph and $\mathbf{p}: V \rightarrow \{ p^n : p \text{ prime, } n \geq 1 \}$ a graph coloring\footnote{The non-edges of $\Gamma$ may be equivalently interpreted as edges labelled $\infty$, in which case the graph $\Gamma$ is complete, but we chose not to use this convention in our presentation.}. We define a group $G(\Gamma, \mathbf{p})$ with the following presentation:
	$$ \langle V \mid a^{\mathbf{p}(a)} = 1, \; bc = cb : \mathbf{p}(a) \neq \infty \text{ and }  b E c \rangle.$$
We call groups of the form $G(\Gamma, \mathbf{p})$ graph products of primary cyclic groups.
\end{definition}

	\begin{convention}\label{convention_finite_graph} From now on all the graph products of primary cyclic groups $G(\Gamma, \mathbf{p})$ considered in this paper are assumed to be such that $\Gamma$ is finite.
\end{convention}

	\begin{fact}[\cite{green}]\label{fact_hop_graph_product} Graph products of primary cyclic groups $G(\Gamma, \mathbf{p})$ (with $\Gamma$ finite, cf. Convention~\ref{convention_finite_graph}) are Hopfian and have solvable word problem.
\end{fact}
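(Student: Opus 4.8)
The plan is to derive both assertions from the structure theory of graph products developed in \cite{green}, together with two elementary observations about the vertex groups. First note that, since $\Gamma$ is finite by Convention~\ref{convention_finite_graph} and each vertex group is cyclic (hence finitely generated), the group $G(\Gamma, \mathbf{p})$ is finitely generated; indeed the vertex set $V$ is itself a finite generating set. This finite generation will be needed for the Hopfian part.

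For the solvable word problem I would invoke Green's normal form theorem for graph products. This theorem asserts that every element of $G(\Gamma, \mathbf{p})$ is represented by a \emph{reduced} word in the syllables coming from the vertex groups, and that any two reduced words representing the same element differ only by a finite sequence of \emph{shuffles}, i.e. transpositions of adjacent syllables whose vertices are joined by an edge of $\Gamma$ (and hence commute in $G(\Gamma, \mathbf{p})$). Because each primary cyclic vertex group is either finite or infinite cyclic, its own word problem is trivially solvable, so one can effectively carry out the reduction of an arbitrary word to reduced form and test the result, up to shuffles, against the empty word. This yields a decision procedure for equality with the identity, i.e. a solution to the word problem; by \cite{rabin} this is equivalent to the computability of $G(\Gamma, \mathbf{p})$.

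For Hopfianity I would use the residual-finiteness route via Malcev's theorem. Each vertex group is residually finite: if $\mathbf{p}(a)$ is a finite prime power $p^n$ then the vertex group is the finite group $\mathbb{Z}/p^n\mathbb{Z}$, and in the remaining (infinite) case it is $\mathbb{Z}$; in both cases residual finiteness is immediate. By the theorem of \cite{green} that a graph product of residually finite groups is residually finite, $G(\Gamma, \mathbf{p})$ is residually finite. Since it is also finitely generated, Malcev's theorem (a finitely generated residually finite group is Hopfian) applies and gives that $G(\Gamma, \mathbf{p})$ is Hopfian.

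The substantive content is supplied entirely by \cite{green}: the normal form theorem underlying the word-problem argument, and the preservation of residual finiteness under graph products underlying the Hopfian argument. The remaining steps, namely that primary cyclic groups are finitely generated and residually finite and that a finite graph yields a finitely generated product, are routine. Consequently the main obstacle, were one to seek a self-contained proof, would be re-establishing Green's normal form and residual-finiteness results, whose proofs rest on a delicate analysis of syllable reduction and shuffling in graph products; since we are content to cite \cite{green}, no such re-derivation is needed here.
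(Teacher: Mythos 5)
Your proof is correct, and it is essentially what the paper intends: the paper states this result as a Fact with no proof at all, only the citation to \cite{green}. Your derivation --- Green's normal form theorem yielding the solvable word problem, and Green's preservation of residual finiteness under graph products combined with Malcev's theorem yielding Hopfianity --- is the standard way to unpack that citation, and the residual-finiteness route to Hopfianity is exactly the argument the paper does spell out for the analogous fact about Coxeter groups (Fact~\ref{fact1}).
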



	\begin{definition}\label{complete_subgroups} Let $G(\Gamma, \mathbf{p})$ be a graph product of primary cyclic groups. A subgroup of $G$ which is generated by the vertices of a maximal complete subgraph (a.k.a. a maximal clique) of $\Gamma$ is called a maximal complete subgroup.
\end{definition}

	\begin{notation} Let $G(\Gamma, \mathbf{p})$ be a graph product of primary cyclic groups. We denote by $Spe(G)$ the subgroup of $Aut(G)$ consisting of those automorphisms $\alpha$ of $G$ such that $\alpha(v)$ is a conjugate of $v$ for every $v \in \Gamma$. We denote by $F(\Gamma)$ the subgroup of $Aut(G)$ consisting of those automorphisms of $G$ which map each maximal complete subgroup of $G$ to a maximal complete subgroup of $G$ (cf. Definition~\ref{complete_subgroups}).
\end{notation}

	\begin{remark} Let $G = G(\Gamma, \mathbf{p})$ be a graph product of primary cyclic groups and denote by $G_{ab}$ the abelianization of $G$. Then $F(\Gamma)$ is isomorphic to the image of $Aut(G)$ under the natural map $Aut(G) \rightarrow Aut(G_{ab})$. In particular, $F(\Gamma)$ is finite.
\end{remark}

	\begin{fact}[{\cite[Theorem~1.2]{gutierrez}}]\label{semi-direct} Let $G = G(\Gamma, \mathbf{p})$ be a graph product of primary cyclic groups. Then $Aut(G)$ admits the following semi-direct product decomposition:
	$$Aut(G) = Spe(G) \rtimes F(\Gamma).$$
\end{fact}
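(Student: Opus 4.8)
The plan is to study $Aut(G)$ via the natural homomorphism $\pi \colon Aut(G) \to Aut(G_{ab})$ induced by the abelianization map, and to prove the single identity $Spe(G) = \ker\pi$. Granting this, the semidirect decomposition follows formally. Indeed, by the Remark preceding the statement the restriction of $\pi$ to $F(\Gamma)$ is an isomorphism onto $\mathrm{im}\,\pi$; so $Spe(G) = \ker\pi$ is normal in $Aut(G)$ (being a kernel), we have $Spe(G) \cap F(\Gamma) = \ker\pi \cap F(\Gamma) = \{1\}$ by the injectivity of $\pi|_{F(\Gamma)}$, and for each $\alpha \in Aut(G)$ there is a unique $\phi \in F(\Gamma)$ with $\pi(\phi) = \pi(\alpha)$, whence $\phi^{-1}\alpha \in \ker\pi = Spe(G)$ and $\alpha = \phi\,(\phi^{-1}\alpha) \in F(\Gamma)\,Spe(G)$. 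These are exactly the defining conditions of the internal semidirect product $Aut(G) = Spe(G) \rtimes F(\Gamma)$.

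It remains to establish $Spe(G) = \ker\pi$. The inclusion $Spe(G) \subseteq \ker\pi$ is immediate, since if $\alpha(v)$ is a conjugate of $v$ for every vertex $v$ then $\overline{\alpha(v)} = \bar v$ in $G_{ab}$ and hence $\pi(\alpha) = \mathrm{id}$. For the reverse inclusion fix $\alpha \in \ker\pi$ and a vertex $v$. As $\alpha$ is an automorphism, $\alpha(v)$ has the same order $\mathbf{p}(v)$ as $v$, and this order is finite (a prime power); thus $\alpha(v)$ is a nontrivial torsion element. The structural input I would use is that in a graph product of finite groups every torsion element is conjugate into a \emph{clique subgroup}, i.e. a subgroup generated by a complete subgraph of $\Gamma$ (cf. Definition~\ref{complete_subgroups}); this is the graph-product analogue of the fact that finite subgroups of a free product are conjugate into a factor, and can be read off from Green's normal form theory \cite{green} (or from the action of $G$ on its associated CAT(0) cube complex, where a torsion element fixes a vertex whose stabiliser is such a subgroup). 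Write accordingly $\alpha(v) = g\,c\,g^{-1}$ with $g \in G$ and $c = w_1^{a_1}\cdots w_k^{a_k}$ a product of powers of the distinct vertices $w_1,\dots,w_k$ of a clique.

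Abelianizing and using $\alpha \in \ker\pi$ yields $\bar v = \overline{\alpha(v)} = \sum_{i=1}^k a_i \bar{w_i}$ inside $G_{ab} = \bigoplus_{w \in V} \mathbb{Z}/\mathbf{p}(w)$. Since the classes $\{\bar w : w \in V\}$ are the standard basis of this direct sum, this forces $v \in \{w_1,\dots,w_k\}$, say $v = w_1$, together with $a_1 \equiv 1 \pmod{\mathbf{p}(v)}$ and $a_i \equiv 0 \pmod{\mathbf{p}(w_i)}$ for $i \geq 2$; hence $c = v$ in $G$ and $\alpha(v) = g v g^{-1}$ is a conjugate of $v$. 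As $v$ was arbitrary, $\alpha \in Spe(G)$, which completes the identification $Spe(G) = \ker\pi$ and with it the proof. The one genuinely nontrivial step is the conjugacy-into-a-clique-subgroup fact invoked above: everything else is either the formal semidirect-product bookkeeping (granted the Remark) or an immediate consequence of the direct-sum description of $G_{ab}$, so the main point to pin down carefully is that this conjugacy statement holds for graph products of primary cyclic groups in precisely the form used here.
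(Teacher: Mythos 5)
The paper offers no proof of this Fact at all---it is imported wholesale from \cite[Theorem~1.2]{gutierrez}---so your attempt has to be judged as a free-standing argument, and judged that way it contains one correct, genuinely useful piece together with a genuine gap. The correct piece is the identification $Spe(G)=\ker\pi$, where $\pi\colon Aut(G)\to Aut(G_{ab})$ is the natural map: the inclusion $Spe(G)\subseteq\ker\pi$ is immediate, and your argument for the converse---$\alpha(v)$ is a nontrivial torsion element, hence by Green's normal form theory \cite{green} conjugate into a clique subgroup, after which the decomposition $G_{ab}\cong\bigoplus_{w\in V}\mathbb{Z}/\mathbf{p}(w)$ forces the clique element to be $v$ itself---is sound; the conjugacy-into-cliques fact you invoke is indeed available in \cite{green} and is the right tool for that step.

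The gap is the role you assign to the Remark. Once $Spe(G)=\ker\pi$ is known, the decomposition $Aut(G)=Spe(G)\rtimes F(\Gamma)$ is not merely a consequence of, but \emph{equivalent to}, the statement you extract from the Remark (that $\pi$ restricted to $F(\Gamma)$ is injective with image all of $\pi(Aut(G))$): your bookkeeping gives one implication, and the reverse is equally formal, since if $\alpha=\sigma\phi$ with $\sigma\in\ker\pi$ and $\phi\in F(\Gamma)$ then $\pi(\alpha)=\pi(\phi)$. So citing the Remark does not reduce the theorem to anything more elementary; it assumes a statement equivalent to the theorem. The hard half---that every automorphism of $G$ can be corrected by an element of $\ker\pi$ so as to carry each maximal complete subgroup \emph{onto} a maximal complete subgroup, i.e.\ that $\pi(F(\Gamma))=\pi(Aut(G))$---is exactly the substantive content of \cite[Theorem~1.2]{gutierrez}, and it is proved neither in the paper (the Remark is itself stated without proof, and is most naturally read as a corollary of the Fact rather than a prior result) nor anywhere in your argument. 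Relatedly, your closing sentence misplaces the difficulty: Green's conjugacy fact is a standard ingredient, whereas the genuinely nontrivial step is constructing the correcting special automorphism, which your proposal never addresses. Your kernel computation would be a real ingredient of a complete proof---for instance it shows each $\alpha(\langle\Delta\rangle)$, $\Delta$ a maximal clique, is a maximal finite subgroup and hence conjugate to some $\langle\Delta'\rangle$, the remaining issue being to choose the conjugating elements coherently---but by itself it establishes the equivalence of the Fact with the paper's unproven Remark, not the Fact.
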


	\begin{definition}[{\cite[Proposition~4.2, Definition~4.3]{laurence}}]\label{def_length_autos} Let $G(\Gamma, \mathbf{p})$ be a graph product of primary cyclic groups. For $\alpha \in Spe(G)$ with $\alpha(v) = w_v v w_v^{-1}$ and $w_v v w_v^{-1}$ a normal form,  we define the length of $\alpha$, denoted as $|\alpha|$, to be $\sum_{v \in \Gamma} lg_{\Gamma}(w_v)$. 
\end{definition}

	\begin{definition}\label{partial_conj} Let $\Gamma$ be a graph, $v \in \Gamma$ and $C$ a union of connected components of $\Gamma \setminus N^*(v)$, where $N^*(v) = \left\{ v' \in \Gamma : v E_{\Gamma} v' \right\} \cup \{ v \}$. We define a map $\pi_{(s, C)}$ as:
$$\begin{cases} \pi_{(s, C)}(t) = sts \;\;\;\; \text{ if } t \in C \\
			  \pi_{(s, C)}(t) = t \;\;\;\;\;\;\; \text{ otherwise. }
\end{cases} $$
The maps of the form $\pi_{(s, C)}$ are called the {\em partial conjugations} of $G(\Gamma, \mathbf{p})$.
\end{definition}

\begin{fact}[\cite{gutierrez}]\label{partial_conj_fact} The partial conjugations are automorphisms of $G(\Gamma, \mathbf{p})$.
\end{fact}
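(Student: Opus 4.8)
The plan is to verify that $\pi_{(s,C)}$ respects the defining relations of $G(\Gamma, \mathbf{p})$ from Definition~\ref{def_cyclic_prod}, so that it extends to an endomorphism of $G$, and then to exhibit an explicit two-sided inverse (alternatively, one may invoke the Hopfianity of Fact~\ref{fact_hop_graph_product} after checking surjectivity, but the explicit inverse is cleaner). Throughout I would write $t^s = sts^{-1}$ for the conjugate of $t$ by $s$, so that by Definition~\ref{partial_conj} the map $\pi_{(s,C)}$ fixes every generator outside $C$ and sends $t \mapsto t^s$ for $t \in C$. The combinatorial fact driving the whole verification is that $C \subseteq \Gamma \setminus N^*(s)$, so that no vertex of $C$ equals or is adjacent to $s$; and that, since $C$ is a union of connected components of $\Gamma \setminus N^*(s)$, any edge of $\Gamma$ joining two vertices of $\Gamma \setminus N^*(s)$ has both endpoints in the same component, hence both in $C$ or both outside $C$.

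The torsion relations are immediate: for $a \in C$ we have $\pi_{(s,C)}(a)^{\mathbf{p}(a)} = (a^{\mathbf{p}(a)})^s = 1$ since $\mathbf{p}(a)$ is finite, and for $a \notin C$ the generator is fixed. For a commuting relation $bc = cb$ arising from an edge $b\,E\,c$, I would argue by cases on the positions of $b$ and $c$ relative to $N^*(s)$. If both $b,c \in N^*(s)$, both are fixed and there is nothing to check. If both $b,c \in \Gamma \setminus N^*(s)$, the combinatorial fact above forces $b \in C \Leftrightarrow c \in C$: when both lie in $C$ one computes $b^s c^s = (bc)^s = (cb)^s = c^s b^s$, and when both lie outside $C$ they are fixed.

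The remaining mixed case, say $b \in N^*(s)$ and $c \in \Gamma \setminus N^*(s)$, is the step I expect to be the main obstacle. Here I would first observe that necessarily $b \neq s$, for otherwise $s\,E\,c$ would put $c \in N^*(s)$, contradicting $c \notin N^*(s)$; hence $b$ is adjacent to $s$ and therefore commutes with $s$. If $c \notin C$ then both $b$ and $c$ are fixed; if $c \in C$, then using $bs = sb$ one computes $b\,c^s = (bs)cs^{-1} = s(bc)s^{-1} = s(cb)s^{-1} = c^s(sbs^{-1}) = c^s b$, so the images of $b$ and $c$ still commute. With all relations verified, $\pi_{(s,C)}$ extends to an endomorphism of $G$. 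Finally, the map defined by conjugation by $s^{-1}$ on $C$ and the identity elsewhere is, by the identical verification (commuting with $s$ implies commuting with $s^{-1}$), also an endomorphism, and a direct computation on generators shows that it is a two-sided inverse of $\pi_{(s,C)}$; thus $\pi_{(s,C)} \in Aut(G)$.
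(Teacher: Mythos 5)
Your proof is correct. The paper itself gives no argument for this fact at all: it is stated as a known result with a citation to Gutierrez--Piggott--Ruane (and it goes back to Laurence's thesis and Green's work), so your self-contained verification is necessarily a different route. What your approach buys is transparency about exactly where the hypothesis on $C$ is used: the requirement that $C$ be a union of connected components of $\Gamma \setminus N^*(s)$ enters only to guarantee that an edge with both endpoints outside $N^*(s)$ cannot have one endpoint in $C$ and the other outside it, and the mixed case $b \in N^*(s)$, $c \in C$ works precisely because $b \neq s$ forces $b\,E\,s$, hence $bs=sb$; these are the two points a reader cannot reconstruct from the bare citation. Your explicit inverse (conjugation by $s^{-1}$ on $C$, identity elsewhere) also cleanly avoids any appeal to Hopfianity, which would be circular in spirit since Fact~\ref{fact_hop_graph_product} is itself quoted from the same literature. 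One small point worth flagging: the paper's Definition~\ref{partial_conj} literally writes $\pi_{(s,C)}(t)=sts$, which is correct only when $s$ has order $2$ (e.g.\ the right-angled Coxeter case); for a general graph product of primary cyclic groups the map must be $t \mapsto sts^{-1}$, as you wrote --- indeed with $t \mapsto sts$ and $\mathbf{p}(s)>2$ even the torsion relation $t^{\mathbf{p}(t)}=1$ fails to be preserved. So your silent normalization is not just permissible but necessary, and it would be worth stating explicitly that you are correcting the displayed formula rather than deviating from it.
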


	\begin{fact}[{\cite[Theorem~4.1]{laurence}}]\label{laurence} Let $G(\Gamma, \mathbf{p})$ be a graph product of primary cyclic groups. Then $Spe(G)$ is generated by the set $X$ of partial conjugations corresponding to the graph $\Gamma$. Further, if $\alpha \in Spe(G)$, then $lg_X(\alpha) \leq |\alpha|$ (cf. Definition~\ref{def_length_autos}).
\end{fact}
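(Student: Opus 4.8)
The statement is Theorem~4.1 of \cite{laurence}, so the plan is to reconstruct its proof in the present notation; both assertions — that the partial conjugations $X$ generate $Spe(G)$ and the quantitative bound $lg_X(\alpha) \leq |\alpha|$ — will fall out of a single induction on the length $|\alpha|$ of Definition~\ref{def_length_autos}. The engine is a \emph{peak-reduction lemma}: for every nontrivial $\alpha \in Spe(G)$ there is a partial conjugation $\pi = \pi_{(s,C)} \in X$ with $|\pi^{\pm 1}\circ\alpha| \leq |\alpha| - 1$. Granting this, the base case $|\alpha| = 0$ forces every conjugator $w_v = 1$, hence $\alpha = \mathrm{id}$ is the empty product; and in the inductive step one writes $\alpha = \pi^{\mp 1} \circ (\pi^{\pm 1}\circ\alpha)$, so $lg_X(\alpha) \leq 1 + lg_X(\pi^{\pm 1}\circ\alpha) \leq 1 + (|\alpha| - 1) = |\alpha|$, proving generation and the sharp bound simultaneously.

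First I would set up the normal-form bookkeeping. For $\alpha \in Spe(G)$ write $\alpha(v) = w_v\, v\, w_v^{-1}$; by the normal-form theory for graph products the conjugator $w_v$ is well defined modulo right multiplication by the centralizer of $v$ (generated by $v$ together with its $\Gamma$-neighbors), and choosing $w_v$ of minimal $\Gamma$-length makes $lg_\Gamma(w_v)$, and hence $|\alpha| = \sum_v lg_\Gamma(w_v)$, unambiguous. I would then record the \emph{coherence constraints} imposed on the family $\{w_v\}$ by $\alpha$ being an automorphism: the relations $v^{\mathbf{p}(v)} = 1$ are preserved automatically, while each commuting relation $uv = vu$ for $u E v$ forces a compatibility condition relating $w_u$ and $w_v$. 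These compatibility conditions are exactly what ties the syntactic data of the conjugators to the combinatorics of $\Gamma$.

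The heart of the argument, and the main obstacle, is the peak-reduction lemma. Given nontrivial $\alpha$, some minimal $w_v \neq 1$; I would pick an outermost letter $s$ occurring in the reduced conjugators and set $C$ to be the collection of vertices whose conjugator exhibits $s$ as its outer letter. Minimality of the $w_v$ already forces $C \subseteq \Gamma \setminus N^*(s)$, and the key combinatorial claim is that the commuting-relation compatibility upgrades this to: $C$ is a union of connected components of $\Gamma \setminus N^*(s)$. Hence $\pi_{(s,C)}$ is a genuine partial conjugation by Fact~\ref{partial_conj_fact}, and a suitable composition $\pi_{(s,C)}^{\pm 1}\circ\alpha$ strips the outer $s$ from every conjugator indexed by $C$. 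The delicate point is to verify that the result is a net decrease $|\pi_{(s,C)}^{\pm 1}\circ\alpha| \leq |\alpha| - 1$, i.e.\ that the cancellations at the $C$-conjugators are not offset by lengthening elsewhere; proving both that $C$ respects the component decomposition of $\Gamma \setminus N^*(s)$ (rather than being an arbitrary vertex set) and that the bookkeeping yields a strict drop is precisely where the structure of $G$ must be exploited.

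Finally I would handle the torsion specific to primary cyclic groups. Since a vertex $s$ may have order $\mathbf{p}(s) > 2$, the relevant conjugation must be taken with the correct power of $s$ so that $\pi_{(s,C)}$ inverts correctly and the cancellation of the previous step is genuine; one checks, with the appropriate syllable-length convention for $lg_\Gamma$, that minimal conjugators carry each power $s^i$ with $1 \leq i \leq \mathbf{p}(s)-1$ in a way that leaves the length accounting intact. With the peak-reduction lemma in hand the induction closes, yielding simultaneously that the partial conjugations generate $Spe(G)$ and the bound $lg_X(\alpha) \leq |\alpha|$.
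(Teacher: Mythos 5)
The paper offers no argument for this statement at all: it is imported verbatim from Laurence's thesis (\cite[Theorem~4.1]{laurence}) as a black-box Fact, so there is no in-paper proof to match. Your overall strategy --- a single induction on $|\alpha|$ driven by a peak-reduction lemma, which simultaneously yields generation by partial conjugations and the bound $lg_X(\alpha)\leq|\alpha|$ --- is indeed the right shape, and it is how arguments of this kind (Laurence's included) are organized.

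However, as a proof your proposal has a genuine gap: the peak-reduction lemma \emph{is} the theorem, and you never prove it. You state it, correctly identify the two claims it rests on, and then defer both of them: (a) that the set $C$ of vertices whose minimal conjugator has outer letter $s$ is a union of connected components of $\Gamma\setminus N^*(s)$, and (b) that composing with $\pi_{(s,C)}^{\pm1}$ produces a net decrease in $\sum_v lg_\Gamma(w_v)$. Your own text flags these as ``the key combinatorial claim'' and ``precisely where the structure of $G$ must be exploited,'' which is an admission that the mathematical content is missing, not a proof of it. Establishing (a) and (b) requires the normal-form and centralizer theory of graph products (Green's thesis): one must analyze how the relations $[\alpha(u),\alpha(v)]=1$ for $uEv$ constrain the cosets $w_vC(v)$, and this is where all the work lies. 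Moreover, your bookkeeping for the reduction step is not set up correctly: for $v\in C$ one has $(\pi\circ\alpha)(v)=\pi(w_v)\,\pi(v)\,\pi(w_v)^{-1}$, so the new conjugator is (a minimal representative of) $\pi(w_v)s$ modulo $C(v)$, not $w_v$ with an outer letter ``stripped''; since $\pi$ also rewrites the letters of $w_v$ lying in $C$, comparing $lg_\Gamma(\pi(w_v)s)$ with $lg_\Gamma(w_v)$ --- at the $C$-vertices and at all other vertices simultaneously --- is exactly the delicate accounting the lemma demands, and it is absent. Given that the paper's stance is to cite this result rather than reprove it, the honest alternatives are either to do the same or to carry out the full peak-reduction analysis; the proposal as written does neither.
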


	\begin{proof}[Proof of Corollary~\ref{Artin_Coxeter}] Let $G(\Gamma, \mathbf{p})$ be a graph product of primary cyclic groups. Let $\alpha_1, ..., \alpha_n$ be a list of the partial conjugations corresponding to the graph $\Gamma$ and let $\beta_1, ..., \beta_k$ be a list of the elements of $F(\Gamma)$. Let also:
$$X = \{\alpha_1, ..., \alpha_n\} \cup \{\beta_1, ..., \beta_k\} \subseteq Aut(G),$$
and $F: \omega^{n} \rightarrow \omega$ be the following function\footnote{We are not interested here in optimal functions $F$ that make $(\star)$ true.}: $F(m_1, ..., m_n) = (\sum_{i \in [1, n]} m_i) + 1$. Then, by Fact~\ref{fact1}, we can apply Theorem~\ref{main_theorem}, and by Facts~\ref{semi-direct}~and~\ref{laurence} we have that $(\star)$ is verified for this choice of $X$ and $F$, and so we are done.
\end{proof}

\section{The Free Projective Plane of Rank $4$}\label{sec_planes}

\begin{definition}[{\cite{hall}}]\label{def_plane} A {\em partial plane} is a system of points and lines satisfying:
	\begin{enumerate}[(A)]
	\item through any two distinct points $p$ and $p'$ there is at most one line $p \vee p'$;
	\item any two distinct lines $\ell$ and $\ell'$ intersect in at most one point $\ell \wedge \ell'$.
\end{enumerate}
We say that a partial plane is a {\em projective plane} if in (A)-(B) above we replace ``at most'' with ``exactly one''. We say that a projective plane is non-degenerate if it contains a quadrangle, i.e. four points such that no three of them are collinear.
\end{definition}

	\begin{definition}[{\cite{hall}}]\label{free_extension} Given a partial plane $P$ we define a chain of partial planes $(P_n : n < \omega)$, by induction on $n < \omega$, as follows:
\newline $n = 0)$. Let $P_n = P$.
\newline $n = 2k +1)$. For every pair of distinct points $p, p' \in P_{2k}$ not joined by a line add a new line $p \vee p'$ to $P_{2k}$ incident \mbox{with only $p$ and $p'$. Let $P_n$ be the resulting plane.}
\newline $n = 2k >0)$. For every pair of parallel lines $\ell, \ell' \in P_{2k-1}$ add a new point $\ell \wedge \ell'$ to $P_{2k-1}$ incident \mbox{with only $\ell$ and $\ell'$. Let $P_n$ be the resulting plane.}
\newline We define the {\em free projective extension} of $P$ to be $F(P) : = \bigcup_{n < \omega} P_n$.
\end{definition}

	\begin{notation}\label{pi_n} Given $4 \leq n \leq \omega$, we let $\pi_0^n$ be the partial plane consisting of a line $\ell$, $n-2$ points on $\ell$ and $2$ points off of $\ell$, and we let $\pi^n = F(\pi^n_0)$. We refer to the plane $\pi^n$, for $4 \leq n \leq \omega$, as the free projective plane of rank $n$. Further, given $k < \omega$, we say that $x \in F(\pi^n_0) = \bigcup_{m < \omega} P_m$ is of stage $k$ if $x \in P_k \setminus P_{k-1}$.
\end{notation}

	\begin{notation}\label{notation_theory} Model-theoretically we consider projective planes $P$ as $L$-structures in a language $L = \{ 0, 1, S_1, S_2, I, \vee, \wedge \}$, where we let the following:
	 \begin{enumerate}[(i)]
	 \item $0$ and $1$ are constant symbols;
	 \item $S_1$ specifies the set of points of $P$ and $S_2$ specifies the set of lines of $P$;
	 \item $I$ is a symmetric binary relation specifying the point-line incidence relation;
	 \item the interpretation of $\wedge$ (intersection of lines) and $\vee$ (join of points) are extended naturally so that $(P, 0, 1, \wedge, \vee)$ becomes a modular geometric lattice. Explicitly, for $p, p' \in S_1$ and $\ell, \ell' \in S_2$ we let $p \wedge p' = 0$, $\ell \vee \ell' = 1$ and:
	 $$p \wedge \ell = \begin{cases} p \; \text{ if } \; p I \ell \\
	 							    0 \; \text{ otherwise;}
\end{cases}
     p \vee \ell = \begin{cases} \ell \; \text{ if } \; p I \ell \\
	 							    1 \; \text{ otherwise.}
\end{cases}$$
\end{enumerate}
\end{notation}

	\begin{remark}\label{collineation} Notice that under this choice of language $L$, if $P$ is a projective plane then $Aut(P)$ is the collineation group of $P$, i.e. the set of bijections of $P$ sending points to points, lines to lines and preserving the point-line incidence relation.
\end{remark}

	\begin{fact}\label{hopfian_fact} Let $4 \leq n < \omega$, then $\pi^n$ is finitely presented, quasi-Hopfian but not Hopfian.
\end{fact}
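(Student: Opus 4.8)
The plan is to establish the three assertions in sequence, since each rests on the structural
analysis of the free projective extension $\pi^n = F(\pi^n_0)$ provided by Notation~\ref{pi_n}.
First I would verify that $\pi^n$ is \emph{finitely presented}. The starting partial plane
$\pi^n_0$ has finitely many points and lines (a line $\ell$, the $n-2$ points on it, and the
two points off it), so it is a finite $L$-structure; I would take its finitely many incidences
as generators and the defining incidence relations as relators, and then argue that the free
extension process of Definition~\ref{free_extension} does not add genuinely new relations:
every element of a later stage $P_k$ is obtained by a join $p \vee p'$ or a meet
$\ell \wedge \ell'$ of already-constructed elements that were not previously joined or met,
and the freeness of the construction means the only relations forced are exactly the
lattice/incidence axioms of Notation~\ref{notation_theory} together with the finitely many
relations of $\pi^n_0$. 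Hence $\pi^n = \langle \bar{a} \mid \varphi_1(\bar{a}), \dots,
\varphi_n(\bar{a}) \rangle$ for a finite presentation in the sense fixed at the start of
Section~\ref{sec_proof}.

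Next I would address \emph{non-Hopfianity}, which is where I expect to lean hardest on the
existing literature. By \cite{johnson, sandler_hopfian} free projective planes of finite rank
admit surjective self-homomorphisms (collineations onto themselves, in the sense of
Remark~\ref{collineation}) that are not injective, so $\pi^n$ is not Hopfian; I would cite this
directly rather than reconstruct the combinatorial endomorphism, since the constructions there
are exactly designed to collapse a free extension onto itself. The key point to emphasize is
that the failure of Hopfianity is genuine and motivates the whole paper: a self-surjection can
identify distinct elements while still covering all of $\pi^n$.

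The main obstacle, and the heart of the argument, is \emph{quasi-Hopfianity}. Here the generating
tuple $\bar{a}$ must be chosen as the finite incidence data of $\pi^n_0$. I would take a
surjective homomorphism $f: \pi^n \to \pi^n$ that is injective on $\bar{a}$ and show $f$ is an
automorphism. The strategy is to exploit that $\pi^n_0$ is \emph{rigid enough}: because $f$ is
injective on the finite base configuration and preserves incidence, $f$ maps the quadrangle-like
starting data $\pi^n_0$ isomorphically onto a copy of $\pi^n_0$ inside $\pi^n$, and the freeness
of the extension (the fact that each stage is built canonically from the previous one by
adjoining unique joins and meets) then propagates injectivity stage by stage through each $P_k$.
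Concretely, one argues by induction on stage that if $f$ is injective on $P_{k}$ then, using that
a new point $\ell \wedge \ell'$ (resp. new line $p \vee p'$) is the \emph{unique} solution to its
incidence constraints, $f$ must send distinct stage-$(k{+}1)$ elements to distinct elements,
forcing global injectivity; combined with the assumed surjectivity this gives $f \in Aut(\pi^n)$.
The delicate step is the base case: one must check that injectivity of $f$ on just the finite
generating incidences of $\pi^n_0$ really does pin down an injective (hence isomorphic) image of
the \emph{whole} seed configuration, and that $f$ cannot collapse a stage-$1$ join onto an
already-present element --- this is precisely the interplay that distinguishes quasi-Hopfianity
from Hopfianity and is the place where the free, non-degenerate structure of $\pi^n$ must be used
most carefully.
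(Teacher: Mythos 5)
Your handling of finite presentability and of non-Hopfianity matches the paper (the paper also simply cites \cite{johnson} for the latter), but your argument for quasi-Hopfianity --- the heart of the Fact --- has a genuine gap: the inductive step is false. A homomorphism of planes preserves incidence but does not \emph{reflect} it, so injectivity on $P_k$ does not prevent collapses at stage $k+1$: the image of $f$ may contain collinearities that are absent among the corresponding elements of $\pi^n$, and then two distinct new joins (or a new join and an old line) are identified, even though each is the unique solution of \emph{its own} incidence constraints --- one and the same line downstairs can satisfy both sets of constraints. Concretely, for $n=4$ pick four distinct points $Q_1, Q_2, Q_3, Q_4$ of $\pi^4$ with $Q_1, Q_2, Q_3$ collinear and $Q_4$ off that line; by freeness the assignment $A_1 \mapsto Q_1$, $A_2 \mapsto Q_2$, $B_1 \mapsto Q_3$, $B_2 \mapsto Q_4$ extends to an endomorphism of $\pi^4$ which is injective on $\pi^4_0$ yet sends the distinct lines $A_1 \vee A_2$ and $A_1 \vee B_1$ to the same line through $Q_1, Q_2, Q_3$. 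So the statement your induction purports to prove --- that injectivity on the base configuration propagates to global injectivity --- is simply false for endomorphisms of $\pi^n$; note that your induction never invokes surjectivity of $f$, and surjectivity is exactly what rules out such examples.

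Exploiting surjectivity is not an elementary stage-by-stage matter: it is the content of Sandler's theorem, which is what the paper's proof actually invokes. The paper's argument is one line: since $f$ is surjective and injective on $\pi^n_0$, \cite[Th.~3]{sandler_hopfian} yields that $f(\pi^n_0)$ generates $\pi^n$ \emph{freely}, and hence $f$ is an automorphism; your inductive paragraph would have to be replaced by this citation, or by a reproof of Sandler's theorem, which needs the structure theory of free planes rather than uniqueness of joins and meets. You might object that in the lattice language of Notation~\ref{notation_theory} non-incidence is expressed by the equation $p \wedge \ell = 0$ and hence is preserved, so that incidence is reflected and your induction goes through; but under that reading every endomorphism is automatically injective (if $f(p) = f(p')$ for distinct points, then $f(p) = f(p) \wedge f(p') = f(p \wedge p') = 0$, contradicting $S_1(f(p))$), which would make $\pi^n$ Hopfian and contradict the last clause of the very Fact you are proving. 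The notions of homomorphism relevant here --- those of Hall, Johnson and Sandler --- are the incidence-preserving maps, and for those your inductive step fails as above; for the same reason, even granting injectivity and surjectivity, you still owe an argument that the resulting bijection has an incidence-preserving inverse, which again comes from Sandler's theorem and not from the induction.
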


	\begin{proof} The fact that $\pi^n$ is finitely presented is clear. Concerning quasi-hopfianity, let $f: \pi^n \rightarrow \pi^n$ be a surjective homomorphism of $\pi^n$ which is injective on $\pi^n_0$, then, by \cite[Th.~3]{sandler_hopfian},  $f(\pi^n_0)$ generates $\pi^n$ freely, hence $f \in Aut(A)$. Finally, the fact that $\pi^n$ is not Hopfian is proved in \cite{johnson}.
\end{proof}

	\begin{proposition}\label{computable_fact} Let $4 \leq n < \omega$, then $\pi^n$ is computable.
\end{proposition}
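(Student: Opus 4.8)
The plan is to show directly that the atomic diagram of $\pi^n$ is computable, by exploiting the fact that the free extension process of Definition~\ref{free_extension} is effective. The starting partial plane $\pi^n_0$ (Notation~\ref{pi_n}) is finite, and an easy induction shows that each $P_m$ in the chain $(P_m : m < \omega)$ is a \emph{finite} partial plane: at an odd stage $2k+1$ one adds a single new line for each of the finitely many pairs of points of $P_{2k}$ not yet joined, and at an even stage $2k > 0$ one adds a single new point for each of the finitely many pairs of parallel lines of $P_{2k-1}$. Moreover, the passage from $P_m$ to $P_{m+1}$ is \emph{uniformly computable}: given a complete finite description of $P_m$ (its set of points, its set of lines, and its incidence relation), we can decide which pairs of points are unjoined and which pairs of lines are parallel, and hence list the new elements together with the incidences they introduce. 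Thus there is an algorithm producing, uniformly in $m$, a full finite description of $P_m$.

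First I would fix a computable coding of the elements of $\pi^n = \bigcup_{m < \omega} P_m$: enumerate the finitely many new elements of each successive stage $P_m \setminus P_{m-1}$ in a fixed order and assign them consecutive natural numbers. Since each element of $\pi^n$ is created exactly once during the construction, this yields a bijection between $\pi^n$ and $\omega$ (the plane being infinite) under which both the domain and the stage function $x \mapsto k$, where $x$ is of stage $k$ (cf. Notation~\ref{pi_n}), are computable. The unary predicates $S_1$ and $S_2$ are then computable, since the construction records for each new element whether it is a point or a line.

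The crucial point is the computability of the incidence relation $I$, and this is where the \emph{freeness} of the extension enters. Given codes for a point $p$ and a line $\ell$, let $s = \max(\mathrm{stage}(p), \mathrm{stage}(\ell))$, so that both $p$ and $\ell$ already belong to $P_s$. The key observation is that incidence between two elements never changes once both exist: a line added at a later odd stage is drawn through a \emph{pair} of already-present points and is a \emph{new} element distinct from $\ell$, while a point added at a later even stage lies on a pair of already-present lines and is a \emph{new} element distinct from $p$. Hence no incidence between $p$ and $\ell$ can be created after stage $s$, and so $p \mathrel{I} \ell$ holds in $\pi^n$ if and only if it holds in the finite plane $P_s$, which we can compute. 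This is precisely the assertion that the free extension introduces no incidences beyond those forced at a finite stage, and it reduces deciding $I$ to the finite computation of $P_s$.

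Finally, the operations $\vee$ and $\wedge$ are computable. For two distinct points $p, p'$, their join $p \vee p'$ is the unique line through both: computing $s = \max(\mathrm{stage}(p), \mathrm{stage}(p'))$, either $p$ and $p'$ are already joined in $P_s$ (and we read off the joining line), or they are unjoined, in which case the line $p \vee p'$ is the element created to join them at the least odd stage exceeding $s$ at which the pair is processed (which is at most $s+2$); in either case its code is computable. The meet of two distinct lines is handled symmetrically, and the remaining mixed values of $\vee$ and $\wedge$ (a point with a line, together with the degenerate cases returning $0$ or $1$) are given directly by the lattice conventions of Notation~\ref{notation_theory} in terms of $I$ and the fixed codes of the constants $0$ and $1$, all of which have already been shown to be computable. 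Therefore every symbol of $L$ has a computable interpretation, so $\pi^n$ is a computable structure. I expect the main obstacle to be the incidence step: one must argue carefully, from the freeness of the construction, that incidence between two existing elements is never created at a later stage, so that $I$ is decided by the finite approximation $P_s$ rather than requiring knowledge of the whole infinite plane.
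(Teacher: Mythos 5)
Your proof is correct, but it takes a genuinely different route from the paper. The paper's own proof is a two-line citation: it invokes \cite{kobaev} for the computability of $\pi^n$ in a language with the sets of points and lines and the graphs of $\vee$ and $\wedge$, and \cite{nikitin} for the decidability of the incidence problem under that representation, and then observes that computability transfers to the language of Notation~\ref{notation_theory}. You instead give a self-contained construction: you build a computable copy of $\pi^n$ directly from the free extension process, coding elements by their order of creation, and you isolate the key point explicitly --- in Definition~\ref{free_extension} each new line (resp.\ point) is incident with \emph{only} the two points (resp.\ lines) it is created to join (resp.\ meet), so no incidence between already-existing elements is ever added later, and hence $I$ restricted to $P_s$ is already correct in $\pi^n$. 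This is exactly the ``freeness'' that underlies the cited results of Nikitin and Kobaev, so in effect you are reproving what the paper imports from the literature. What your approach buys is transparency and uniformity: an explicit computable presentation in the paper's own language, with no dependence on external results (and it works uniformly in $n$). What the paper's approach buys is brevity, and the reassurance of results whose proofs have been checked in print; in particular your argument quietly uses that the free extension of a partial plane is a partial plane at every stage (so that joins are unique and ``unjoined'' is well defined), a standard fact from \cite{hall} that you should cite or verify to make the induction airtight.
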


	\begin{proof} In \cite{kobaev} it is proved that, for $4 \leq n < \omega$, $\pi^n$ is computable in a language specifying the set of points, the set of lines and the graph of the partial functions $\vee$ and $\wedge$. Furthermore, in \cite{nikitin} it is proved, under the same computable representation, that the incidence problem for $\pi^n$ is decidable. Thus, it is immediate to see that $\pi^n$ is computable also with respect to our choice of language (Notation~\ref{notation_theory}).
\end{proof}

	\begin{notation} Clearly for $n = 4$ we can consider $\pi^n_0$ as consisting simply of four points $A_1, A_2, B_1, B_2$. Let now $a_1 = (A_1 \vee A_2) \wedge (B_1 \vee B_2)$ and $a_2 = (A_1 \vee B_1) \wedge (A_2 \vee B_2)$, and consider the collineations (cf. Remark~\ref{collineation} for a definition of collineation) of $\pi^4$ detemined by the following assignments:
	$$\theta_1 = \left( \begin {array}{c} 
	  A_1 \mapsto A_2 \\
	  A_2 \mapsto A_1 \\
	  B_1 \mapsto B_1 \\
	  B_2 \mapsto B_2 \\
	 \end {array} \right), \quad
	 \theta_2 = \left( \begin {array}{c} 
	  A_1 \mapsto A_2 \\
	  A_2 \mapsto B_1 \\
	  B_1 \mapsto B_2 \\
	  B_2 \mapsto A_1 \\
	 \end {array} \right), \quad
	 \phi = \left( \begin {array}{c} 
	  A_1 \mapsto A_1 \\
	  A_2 \mapsto a_1 \\
	  B_1 \mapsto B_1 \\
	  B_2 \mapsto a_2 \\
	 \end {array} \right). $$
\end{notation}

\begin{fact}[{\cite{sandler}}] $Aut(\pi^4) = \langle \theta_1, \theta_2, \phi \rangle_{Aut(\pi^4)}$, where $\theta_1$ and $\theta_2$ generate a group $S_4$ isomorphic to the symmetric group on the four points $A_1, A_2, B_1, B_2$, while $\phi^2 = id_{\pi^4}$. Thus, any element $\alpha \in Aut(\pi^4)$ can be written as follows, where $P_{i_j} \in S_4$:
\begin{equation}\tag{$*$} \alpha = P_{i_1} \circ \phi \circ P_{i_2} \circ \cdots \circ P_{i_n} \circ \phi \circ P_{i_{n+1}}.
\end{equation}
We say that $\alpha \in Aut(\pi^4)$ is of length (at most) $n$ if $\alpha$ can be written in form $(*)$ above and in $(*)$ there are $n$ occurrences of the collineation $\phi$.
\end{fact}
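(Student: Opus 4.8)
\medskip

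The plan is to trade the abstract group $Aut(\pi^4)$ for a concrete combinatorial object by exploiting the freeness of $\pi^4$ over a quadrangle, which is exactly the input already used in Fact~\ref{hopfian_fact} (Sandler's theorem that a generating quadrangle generates $\pi^4$ freely). Since the base quadrangle $Q = (A_1, A_2, B_1, B_2)$ generates $\pi^4$ freely, every collineation $\alpha$ is determined by the ordered image $(\alpha A_1, \alpha A_2, \alpha B_1, \alpha B_2)$, and conversely every ordered quadrangle $Q'$ that generates $\pi^4$ freely arises from a unique collineation; thus $Aut(\pi^4)$ is in canonical bijection with the set of ordered free generating quadrangles, with $\mathrm{id}$ corresponding to $Q$. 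Under this dictionary, proving $Aut(\pi^4) = \langle \theta_1, \theta_2, \phi\rangle$ amounts to showing that the subgroup $\langle \theta_1, \theta_2, \phi\rangle$ already acts transitively on ordered free generating quadrangles, i.e. that any such $Q'$ can be carried back to $Q$ by a word in the three generators.

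Two of the ingredients are immediate. Reading off the assignments, $\theta_1 = (A_1\,A_2)$ is a transposition and $\theta_2 = (A_1\,A_2\,B_1\,B_2)$ is a $4$-cycle on the base points; since a transposition of two cyclically adjacent letters together with the full cycle generate the whole symmetric group, $\langle \theta_1, \theta_2\rangle$ realizes all of $S_4$ acting by permutations of $Q$ (and every permutation of $Q$ does extend to a collineation, precisely because $Q$ is a free generating set). That $\phi^2 = \mathrm{id}$ I would check by a direct computation inside the finite, low-stage part of $\pi^4$: using that $\phi$ commutes with $\vee$ and $\wedge$, one evaluates $\phi$ on the stage-$\leq 2$ points $a_1, a_2$ and verifies that $\phi^2$ fixes each of $A_1, A_2, B_1, B_2$, whence $\phi$ is an involution by freeness.

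The core of the argument is the transitivity statement, which I would prove by induction on a complexity measure of the ordered free generating quadrangle $Q' = (P_1, P_2, P_3, P_4)$, for instance the total stage $\sum_{j} \mathrm{stg}(P_j)$ in the sense of Notation~\ref{pi_n}. If every $P_j$ is at stage $0$ then $Q'$ is a permutation of $Q$ and is handled by $S_4 = \langle \theta_1, \theta_2\rangle$. Otherwise the key reduction lemma is that, after premultiplying by a suitable element of $S_4$ to move the points of maximal stage into the positions acted on by $\phi$, applying $\phi$ rewrites those top-stage points as diagonal joins-and-meets of strictly lower-stage points while leaving the quadrangle free and generating, so that the total stage strictly decreases. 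Iterating drives the complexity down to $0$, at which point $S_4$ finishes; reading off the moves used yields the desired word in $\theta_1, \theta_2, \phi$.

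I expect this reduction lemma to be the main obstacle, since it rests on a genuine structural understanding of which ordered quadrangles generate $\pi^4$ freely and of how the stage filtration of Definition~\ref{free_extension} behaves under the substitution encoded by $\phi$; this is precisely the analysis carried out in \cite{sandler}. Once transitivity is established the normal form $(*)$ is purely formal: any element of the group is a word in $\theta_1, \theta_2, \phi$, and using $\langle \theta_1, \theta_2\rangle = S_4$ to amalgamate consecutive permutation letters into single symbols $P_{i_j}$ and $\phi^2 = \mathrm{id}$ to forbid two adjacent occurrences of $\phi$, every element is brought into the alternating shape $P_{i_1}\circ \phi \circ P_{i_2}\circ\cdots\circ\phi\circ P_{i_{n+1}}$; the length is then read off as the least number of occurrences of $\phi$ among all such expressions.
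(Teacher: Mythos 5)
The first thing to know is that the paper does not prove this statement at all: it is a Fact quoted from Sandler's 1965 paper \cite{sandler}, and the same is true of Fact~\ref{sandler_fact}; both enter the proof of Corollary~\ref{cor_free_planes} as black boxes. So the only meaningful benchmark is Sandler's own argument, of which your sketch is a plausible reconstruction: the dictionary between collineations and ordered quadrangles freely generating $\pi^4$ is exactly the freeness statement from \cite{sandler_hopfian} already used in Fact~\ref{hopfian_fact}, and the $S_4$ step (adjacent transposition plus $4$-cycle) is correct and routine. But the core of your argument --- the reduction lemma asserting that for a free generating quadrangle of positive total stage some $S_4$-rearrangement followed by $\phi$ strictly decreases total stage --- is precisely the substance of the theorem, and you do not prove it; you explicitly defer it to \cite{sandler}. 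Nor is it a step one can wave through: applied to the base quadrangle $Q$ itself, $\phi$ \emph{raises} total stage from $0$ to $4$ (the diagonal points have stage $2$), so the lemma genuinely requires knowing which quadrangles freely generate $\pi^4$ and how the stage filtration behaves under taking diagonal points, i.e.\ the machinery of Sandler's papers. As a self-contained proof your proposal therefore has a genuine gap, although in fairness it is no less complete than the paper's own treatment, which consists of the citation alone.

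A genuinely useful by-product of your plan: the verification of $\phi^2 = \mathrm{id}$ you propose, run literally on the paper's printed definitions, does not verify but refute, because the paper's $a_2$ is a typo. As printed, $a_2 = (A_1 \vee B_1) \wedge (A_2 \vee B_2)$ is incident with the line $A_1 \vee B_1$, so the image $(A_1, a_1, B_1, a_2)$ of the base quadrangle under $\phi$ contains the collinear triple $A_1, B_1, a_2$; since collineations preserve non-collinearity, no collineation can realize the printed assignment. The intended point is the third diagonal point $a_2 = (A_1 \vee B_2) \wedge (A_2 \vee B_1)$. With that correction your computation closes: $\phi(a_1) = (A_1 \vee a_1) \wedge (B_1 \vee a_2) = (A_1 \vee A_2) \wedge (A_2 \vee B_1) = A_2$, and $\phi(a_2) = (A_1 \vee a_2) \wedge (a_1 \vee B_1) = (A_1 \vee B_2) \wedge (B_1 \vee B_2) = B_2$, so $\phi^2$ fixes the free generating set $\{A_1, A_2, B_1, B_2\}$ and is the identity by freeness. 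Your closing remark --- that once generation, $\langle \theta_1, \theta_2 \rangle = S_4$ and $\phi^2 = \mathrm{id}$ are in hand, the normal form $(*)$ and the notion of length follow formally --- is correct.
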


	\begin{definition} We say that $\alpha \in Aut(\pi^4)$ is of stage $n < \omega$ if for every $C \in \{A_1, A_2, B_1, B_2\}$ we have that $\alpha(C)$ is of stage $ \leq n$ (in the sense of Notation~\ref{pi_n}).
\end{definition}

	\begin{fact}[{\cite{sandler}}]\label{sandler_fact}
Every collineation $\alpha \in Aut(\pi^4)$ of stage $n$ is of length $\leq n$.
\end{fact}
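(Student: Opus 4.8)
The plan is to prove the statement by induction on the stage $n$, peeling off one occurrence of $\phi$ from the normal form $(*)$ at each step. For the base case $n = 0$, a collineation $\alpha$ of stage $0$ sends each of the four generators $A_1, A_2, B_1, B_2$ to a point of stage $\leq 0$, hence back into the generating set $\{A_1, A_2, B_1, B_2\}$ (points map to points, and $P_0$ contains only these four points and the line $\ell$). Since $\alpha$ is injective, its restriction to these four points is a permutation; because $\pi^4_0$ freely generates $\pi^4$ (cf. the use of \cite[Th.~3]{sandler_hopfian} in the proof of Fact~\ref{hopfian_fact}) and a collineation is determined by its action on a free generating set, $\alpha$ is exactly the element of the subgroup $S_4 = \langle \theta_1, \theta_2 \rangle$ realizing that permutation. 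Thus $\alpha$ has length $0$, as required.

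For the inductive step I assume the statement for all stages smaller than $n$, with $n \geq 1$, and let $\alpha$ have stage $n$. The key claim is that there is a permutation $P \in S_4$ such that, using $\phi^{-1} = \phi$, the collineation $\beta := \phi \circ P \circ \alpha$ has stage strictly smaller than $n$. Granting this claim, the induction hypothesis gives that $\beta$ can be written in the form $(*)$ with strictly fewer than $n$ occurrences of $\phi$; then $\alpha = P^{-1} \circ \phi \circ \beta$ exhibits $\alpha$ in the form $(*)$ with at most $n$ occurrences of $\phi$, so $\alpha$ has length $\leq n$. (Note that generator-images are points, hence live at \emph{even} stages by Notation~\ref{pi_n}, so the reduction will in fact lower the stage by $2$ and the bound $\leq n$ is comfortably loose; I will only need a strict decrease.)

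Establishing this claim is the geometric heart of the argument and the step I expect to be the main obstacle. Here I would analyze the four images $\alpha(A_1), \alpha(A_2), \alpha(B_1), \alpha(B_2)$ together with their canonical construction in the free extension: each point of positive stage arises as the intersection of two lines of strictly smaller stage, which are themselves joins of points of smaller stage, and in a \emph{free} plane this predecessor decomposition is unique, as there are no unexpected incidences. The role of $\phi$ is precisely to invert, on the generating configuration, the two construction steps producing the stage-$2$ points $a_1 = (A_1 \vee A_2) \wedge (B_1 \vee B_2)$ and $a_2 = (A_1 \vee B_1) \wedge (A_2 \vee B_2)$ from the quadrangle. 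I would use the uniqueness of the predecessor decomposition to locate, among the top-stage images of $\alpha$, the subconfiguration that $\phi$ collapses, choose $P$ so as to place that subconfiguration in the position expected by $\phi$, and then verify by a direct computation with $\vee$ and $\wedge$ that $\phi \circ P \circ \alpha$ lowers the stage of every image.

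The two delicate points, where the explicit definition of $\phi$ and the specific combinatorics of $\pi^4$ are genuinely needed, are: first, that whenever $n \geq 1$ such a collapsible top-stage subconfiguration must be present (so that some reduction is always possible); and second, that a \emph{single} permutation $P$ simultaneously lowers the stages of all four images rather than lowering some while raising others (recall that, left unchecked, applying $\phi$ could raise the stage of a low-stage image). I expect these to follow from the rigidity of the free construction — any collineation is forced to respect the unique meet/join decompositions — but this is exactly the place where a careful case analysis of how $\alpha$ acts on the lines $A_1 \vee A_2$, $B_1 \vee B_2$, $A_1 \vee B_1$, $A_2 \vee B_2$ spanning the quadrangle will be required.
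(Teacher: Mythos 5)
The first thing to note is that the paper itself contains no proof of Fact~\ref{sandler_fact}: it is quoted directly from Sandler's paper \cite{sandler} (as is the preceding fact giving the generators $\theta_1, \theta_2, \phi$ of $Aut(\pi^4)$), so there is no internal argument to compare yours against. Judged on its own terms, your proposal is not a proof but a proof plan whose central step is missing. The outer induction is routine bookkeeping: the base case (stage-$0$ collineations permute the quadrangle and hence lie in $S_4 = \langle \theta_1, \theta_2\rangle$, since the quadrangle generates $\pi^4$ and points occupy even stages) is fine, and the formal manipulation $\alpha = P^{-1} \circ \phi \circ \beta$ with $\beta = \phi \circ P \circ \alpha$ correctly converts a stage reduction into a length bound. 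But the entire mathematical content of the statement is concentrated in what you call the ``key claim'' and then explicitly defer: that for every collineation $\alpha$ of stage $n \geq 1$ there exists a \emph{single} permutation $P \in S_4$ such that $\phi \circ P \circ \alpha$ has stage strictly less than $n$. You describe what you \emph{would} do to establish it (uniqueness of predecessor decompositions in the free extension, locating a collapsible top-stage subconfiguration, a case analysis on the lines spanning the quadrangle), and you yourself name the two points where the argument could fail: that a collapsible subconfiguration always exists, and that one $P$ lowers all four image stages simultaneously rather than trading some off against others. Neither point is proved, nor even reduced to a precisely stated lemma.

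This is a genuine gap, not a routine verification left to the reader: the reduction claim \emph{is} Sandler's theorem. Establishing it requires the detailed structure theory of collineations of free planes (the analysis of how a collineation interacts with the unique construction history of each element, developed across Sandler's 1963 and 1965 papers); the ``rigidity of the free construction'' you invoke is exactly the nontrivial input, not something that comes for free. What you have written is a plausible reconstruction of the \emph{shape} of such an argument — and it is consistent with how one would expect the cited result to be proved — but as it stands it proves nothing beyond the base case. To complete it you would either have to carry out the geometric case analysis in full, or do what the paper does and cite \cite{sandler}.
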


	\begin{proof}[Proof of Corollary~\ref{cor_free_planes}] Let $X = \{\theta_1, \theta_2, \phi\}$ and $F: \omega^{n} \rightarrow \omega$ be the following function\footnote{We are not interested here in optimal functions $F$ that make $(\star)$ true.}: $F(m_1, ..., m_n) = \sum_{i \in [1, n]} 2(m_i + 1)$, then by Facts~\ref{hopfian_fact}~and~\ref{sandler_fact} and Proposition~\ref{computable_fact} we can apply Theorem~\ref{main_theorem} and $(\star)$ is verified \mbox{for this choice of $X$ and $F$.}
\end{proof}


\begin{thebibliography}{10}

\bibitem{alvir}
R. Alvir, J. F. Knight, and C. F. D. McCoy.
\newblock {\em Complexity of Scott Sentences}.
\newblock Preprint.

\bibitem{brenti}
Andreas Bjorner and Francesco Brenti.
\newblock {\em Combinatorics of Coxeter Groups}.
\newblock Graduate Texts in Mathematics, Vol. 231, Springer-Verlag, New York, 2005.

\bibitem{calvert}
W. Calvert, V. S. Harizanov, J. F. Knight, and S. Miller.
\newblock {\em Index Sets of Computable Models}.
\newblock  Algebra Logika {\bf 45} (2006), no. 05, 538-574, 631-632.

\bibitem{davis}
M. M. Davis. 
\newblock {\em The Geometry and Topology of Coxeter Groups}.
\newblock  London Mathematical Society Monographs Series, 32. Princeton University Press, Princeton, NJ, 2008.

\bibitem{elman}
Richard Elman.
\newblock {\em Lectures on Abstract Algebra}.
\newblock Preliminary version, available on the web.

\bibitem{green}
E. R. Green.
\newblock {\em Graph Products}.
\newblock PhD thesis, University of Warwick, 1991. 

\bibitem{gutierrez}
M. Gutierrez, A. Piggott and K. Ruane.
\newblock {\em On the Automorphisms of a Graph Product of Abelian Groups}.
\newblock Groups Geom. Dyn. {\bf 06} (2012), 125-153.

\bibitem{hall}
M. Hall.
\newblock {\em Projective Planes}.
\newblock Trans. Amer. Math. Soc. {\bf 54} (1943), 229-277.

\bibitem{trainor1}
Meng-Che Ho and M. Harrison-Trainor.
\newblock {\em On Optimal Scott Sentences of Finitely Generated Algebraic Structures}.
\newblock Proc. Amer. Math. Soc. {\bf 146} (2018), no. 10, 4473-4485.

\bibitem{ho}
Meng-Che Ho.
\newblock {\em Describing Groups}.
\newblock Proc. Amer. Math. Soc {\bf 145} (2017), 2223-2239.

\bibitem{hodges}
W. Hodges.
\newblock {\em Model Theory}.
\newblock Cambridge University Press, 1993.

\bibitem{humphreys}
James E. Humphreys.
\newblock {\em Reflection Groups and Coxeter Groups}.
\newblock Cambridge Studies in Advanced Mathematics, 29. Cambridge University Press, Cambridge, 1990.

\bibitem{knight1}
J. Carson, V. Harizanov, J. F. Knight, K. Lange, C. McCoy, A. Morozov, S. Quinn, C. Safranski, and J. Wallbaum.
\newblock {\em Describing Free Groups}.
\newblock Trans. of the Amer. Math. Soc. {\bf 364} (2012), 5715-5728.

\bibitem{johnson}
M. L. Johnson.
\newblock {\em Homomorphisms of Free Planes}.
\newblock Math. Z. {\bf 125} (1972), 255-263.

\bibitem{knight2}
J. F. Knight and V. Saraph.
\newblock {\em Scott Sentences for Certain Groups}.
\newblock Arch. Math. Logic {\bf 57} (2018), no. 03-04, 453-472.

\bibitem{kobaev}
N. T. Kobaev.
\newblock {\em Complexity of the Isomorphism Problem for Computable Free Projective Planes of Finite Rank}.
\newblock Sib. Math. J. {\bf 59} (2018), no. 2, 295-308.

\bibitem{laurence}
M. R. Laurence. 
\newblock {\em Automorphisms of Graph Products of Groups}.
\newblock Ph.D Thesis, QMW College, University of London, 1992.

\bibitem{laurence_artin}
M. R. Laurence. 
\newblock {\em A Generating Set for the Automorphism Group of a Graph Group}.
\newblock J. London Math. Soc. (2) {\bf 52} (1995), 318-334.

\bibitem{muller}
R. B. Howlett, B. M{\"u}hlherr and K. Nuida.
\newblock {\em Intrinsic Reflections and Strongly Rigid Coxeter Groups}.
\newblock Proc. London Math. Soc. (3) {\bf 116} (2018), 534-574.

\bibitem{nikitin}
A. A. Nikitin.
\newblock {\em On Freely Generated Projective Planes}.
\newblock Algebra Logika {\bf 22} (1983), no. I, 61-78.

\bibitem{rabin}
M. O. Rabin.
\newblock {\em Computable Algebra, General Theory, and Theory of Computable Fields}.
\newblock Trans. Amer. Math. Soc. {\bf 95} (1960), 341-360.

\bibitem{sandler_hopfian}
R. Sandler.
\newblock {\em The Collineation Groups of Free Planes}.
\newblock Trans. Amer. Math. Soc. {\bf 107} (1963) 129-139. 

\bibitem{sandler}
R. Sandler.
\newblock {\em The Collineation Groups of Free Planes II: A Presentation for the Group $G_2$}.
\newblock Proc. Amer. Math. Soc. {\bf 16} (1965), 181-186. 

\bibitem{trainor2}
M. Harrison-Trainor.
\newblock {\em Describing Finitely Presented Algebraic Structures}.
\newblock To appear, available on the ArXiv.

\end{thebibliography}
\end{document}